\pgfplotsset{compat=1.18}
\newtheorem{theorem}{Theorem}
\newtheorem{lemma}{Lemma}
\newtheorem{proposition}{Proposition}
\newtheorem{corollary}{Corollary}
\newtheorem{definition}{Definition}
\newtheorem{example}{Example}
\newtheorem{remark}{Remark}
\newtheorem{conjecture}{Conjecture}
\newtheorem{observation}{Observation}
 \title{The Power Contamination Problem on Grids Revisited: Optimality, Combinatorics, and Links to Integer Sequences}
 \author{
El-Mehdi Mehiri$^{a,b}$\footnote{Corresponding author. Email: elmehdi.mehiri@emse.fr or  mehiri314@gmail.com}  and Mohammed L. Nadji$^{b}$\footnote{Email: m.nadji@usthb.dz}
}
\date{}
\begin{document}

\maketitle
\begin{center}
{\small
$^{a}$ Mines Saint-Étienne, CMP, Department of Manufacturing Sciences and Logistics, 
F-13120 Gardanne, France. \\[4pt]
$^{b}$ University of Science and Technology Houari Boumediene, Faculty of Mathematics, RECITS Laboratory, Po. Box 32, El-Alia,
16111 Bab-Ezzouar, Algiers, Algeria.
}
\end{center}

\begin{abstract}
\noindent We study the power contamination problem on rectangular grid graphs. Given an initial set of contaminated cells, contamination propagates according to local geometric rules, and the objective is to minimize the size of a set that contaminates the whole grid. We disprove the previously conjectured formula for the contamination number and determine its exact value for every grid \(G(n,m)\). We also derive recurrence relations and structural properties of optimal contamination sets. In addition, we initiate the enumeration of optimal solutions and obtain explicit formulas for several grid families. These counting results reveal connections with classical combinatorial structures, including ternary words with forbidden factors and permutations, and suggest a further link with the large Schr\"oder numbers. The paper therefore provides both an exact optimization result and a first systematic combinatorial analysis of optimal contamination configurations on grids.
\end{abstract}

\medskip
\noindent\textbf{Keywords:} Power contamination; power domination; grid graphs; combinatorial optimization; optimal configurations; integer sequences.

\medskip
\noindent\textbf{MSC (2020):} 05C69; 05C57; 05A15; 90C27; 05C90.
\section{Introduction}

Power domination is a propagation-based variant of domination that arose from the problem of monitoring electric power networks by placing phase measurement units (PMUs) at selected nodes   \cite{BrueniHeath2005}. In the graph-theoretic model introduced by Haynes et al. \cite{HaynesHedetniemiHedetniemiHenning2002}, a set of initially monitored vertices first observes its closed neighborhood, after which further vertices become observed through an iterative propagation rule reflecting the inference mechanisms available in electrical networks. This viewpoint, together with the simplified formulation of Brueni and Heath \cite{BrueniHeath2005}, has made power domination a well-established topic at the interface of graph theory, combinatorial optimization, and network monitoring; see, for example, \cite{ZhaoKangChang2006,Liao2016,FerreroHogbenKenterYoung2017,WhitlatchShultisRamirezOrtizKniahnitskaya2023}. Like the original domination set problem \cite{Haynes1998}, the power domination problem has been proven to be NP-complete \cite{Guo_2008}.

From the algorithmic and structural points of view, classical power domination has been studied on a wide variety of graph classes. Exact values, upper bounds, and efficient algorithms are known for several families, including block graphs, generalized Petersen graphs, permutation graphs, maximal planar graphs, regular claw-free graphs, rectangular grids, triangular grids with triangular or hexagonal boundary, Kn\"odel graphs, Hanoi graphs, various chemical and network-inspired structures, and Mycielskian constructions; see, among others, \cite{Dorfling2006,Ferrero2016,XuKangShanZhao2006,XuKang2011,Wilson2019,DorbecGonzalezPennarun2019,LuMaoWang2020,BoseGledelPennarunVerdonschot2020,StephenRajanRyanGrigoriousWilliam2015,VargheseVijayakumarHinz2018,SundaraRajanArulanandPrabhuRajasingh2023,SreethuKVargheseVarghese2024}. These results illustrate a recurring phenomenon in the area: although the general problem is computationally difficult, many structured families admit precise combinatorial descriptions.

Another important direction concerns the behavior of power domination under graph operations, especially graph products. Dorbec, Mollard, Klav\v zar, and \v Spacapan studied power domination in product graphs and obtained exact or near-exact results for direct, strong, and lexicographic products, in particular for products of paths \cite{DorbecMollardKlavzarSpacapan2008}. Later work focused more specifically on Cartesian products, providing additional constructions, bounds, and Vizing-like inequalities \cite{KohSoh2016,AndersonKuenzel2022}. This perspective is especially relevant for grid-like graphs, since many natural lattice families can be viewed as products or product-like constructions.

For the present paper, however, the literature on \emph{variants} of power domination is particularly relevant. A major generalization is \(k\)-power domination, introduced by Chang, Dorbec, Montassier, and Raspaud, which interpolates between domination and classical power domination and has since been developed for hypergraphs, block graphs, weighted trees, Sierpiński  graphs,  and regular claw-free graphs \cite{Dorbec2014, ChangDorbecMontassierRaspaud2012,ChangRoussel2015,WangChenLu2016,ChengLuZhou2020,ChenLuYe2022}. Other variants modify the admissible initial sets, the optimization criterion, or the robustness requirements, leading to restricted power domination, fault-tolerant power domination, connected power domination, failed power domination, throttling, and random-failure models \cite{PaiChangWang2010,BozemanBrimkovEricksonFerreroFlaggHogben2019,BrimkovMikesellSmith2019,GlasserJacobLedermanRadziszowski2020,BrimkovCarlsonHicksPatelSmith2019,GanesamurthySrimathiJeyaranjani2025,BjorkmanBrennanFlaggKoch2026}. Together, these works show that even small changes in the monitoring process can lead to mathematically distinct problems with new combinatorial and algorithmic features.

Beyond optimization of minimum-size monitoring sets, several more global aspects of the process have also been investigated. Zhao, Kang, and Chang established general upper bounds for the power domination number in connected graphs and connected claw-free cubic graphs \cite{ZhaoKangChang2006}. Liao introduced a bounded-time version of the problem, where the propagation phase is restricted to a prescribed number of rounds \cite{Liao2016}, while Ferrero, Hogben, Kenter, and Young formalized the notion of power propagation time and studied its interaction with lower bounds on the power domination number \cite{FerreroHogbenKenterYoung2017}. More recently, Whitlatch et al.\ initiated the enumerative study of power dominating sets by counting power dominating sets of fixed size in complete \(m\)-ary trees and relating this count to probabilities of successful random placement \cite{WhitlatchShultisRamirezOrtizKniahnitskaya2023}. These developments indicate that the area now extends well beyond the determination of a single minimum-cardinality parameter.

The present paper belongs to this broader line of work, but studies a different propagation model. More precisely, we consider the \emph{power contamination problem} on rectangular grids, introduced by Ainouche and Bouroubi as a dynamic variant related to power domination \cite{Ainouche2021}. In this model, propagation is no longer governed by the standard unique-neighbor forcing rule. Instead, contamination is triggered by specific local geometric configurations involving Moore and von Neumann neighborhoods. As a consequence, the problem is inherently geometric, and the structure of feasible initial configurations is strongly constrained by the shape of the grid.

In this paper, we revisit the power contamination problem on grids from a combinatorial point of view. We first disprove the conjecture proposed in \cite{Ainouche2021} for the contamination number of rectangular grids, and then determine the exact value of this parameter for all grids \(G(n,m)\). We also derive recurrence relations and investigate the enumeration and structure of optimal contamination sets. This leads to several connections with classical integer sequences and pattern-avoidance phenomena, showing that the model gives rise not only to an optimization problem on grids, but also to a rich enumerative combinatorial framework.

The paper is organized as follows. In Section~\ref{sec:definitions}, we introduce the model and the terminology used throughout the paper. Section~\ref{sec:structural} establishes structural properties of the contamination process on grids. In Section~\ref{sec:value}, we determine the exact value of the power contamination number. Section~\ref{sec:enumeration} is devoted to the enumeration and structure of optimal contamination sets, where several combinatorial connections emerge. Finally, Section~\ref{sec:conclusion} summarizes the main results and discusses further perspectives.

\section{Definitions and Preliminaries}
\label{sec:definitions}
In this section, we introduce the power contamination model on rectangular grids and fix the terminology used throughout the paper. We first describe the contamination rules and the associated propagation process, and then present several notions and special configurations that will be used in the proofs of the main results.

\subsection{The contamination model}
We begin by defining the grid \(G(n,m)\), the relevant neighborhoods of a cell, and the local rules governing contamination. These rules determine the propagation process from an initial contamination set and form the basis of the power contamination problem studied in this paper.

Let $G(n,m)=(V,E)$ be a grid graph, where the integers $n\geq 1$ and $m\geq 1$ represent the number of rows and columns, respectively. To avoid symmetry, we consider only horizontal grids, i.e.,  $m\geq n\geq 1$. 
Each cell is identified by its coordinates $(i,j)$, where \(i \in [n]\) and \(j \in [m]\) denote the row and column indices, respectively, with \([k]=\{1,\ldots,k\}\) for every integer \(k \ge 1\). The set of all cells of $G(n,m)$ is 
$$V_{n,m}=\{(i,j)\mid i\in[n],\; j\in[m] \}.$$

For every cell \( v \in V_{n,m} \), we denote by \( M(v) \) and \( VN(v) \) the \emph{Moore neighborhood} and the \emph{von Neumann neighborhood} of \( v \), respectively (see Figure~\ref{fig:neighborhood_types} for an illustration).

A cell \( u \) is said to be \emph{threatened} with contamination by two contaminated cells \( v \) and \( w \) if at least one of the following conditions holds:
\begin{itemize}
    \item[$(i)$] Both \( v \) and \( w \) belong to the von Neumann neighborhood of \( u \), i.e., \( v, w \in VN(u) \).
    \item[$(ii)$] Both \( v \) and \( w \) belong to the Moore neighborhood of \( u \), and their Moore neighborhoods intersect only at \( u \), i.e., $v, w \in M(u)$ and $M(v) \cap M(w) = \{u\}$.
\end{itemize}

Figure~\ref{fig:rules} provides an illustration of all possible contamination rules generated from Conditions $(i)$ and $(ii)$.

\begin{figure}[H]
    \centering



    \caption{Moore and von Neumann neighborhoods of an interior cell, a corner cell, and a boundary cell. 
The reference cell is shown in orange, and its neighborhood cells are shown in red.}
    \label{fig:neighborhood_types}
\end{figure}

Therefore, a cell \( u = (i,j) \in V_{n,m} \) is contaminated by two contaminated cells \( v \) and \( w \) if and only if at least one of the rules  \textup{(\texttt{a})}--\textup{(\texttt{h})} is satisfied. 
The contamination rules are presented in Figure~\ref{fig:rules} in both algebraic and illustrated form. In the illustrations, the threatened cell \(u\) is shown in orange, whereas the contaminating cells \(v\) and \(w\) are shown in red. This color convention will be used throughout the paper to distinguish threatened cells from contaminating cells.

\begin{figure}[H]
\begin{minipage}{.5\textwidth}
\begin{enumerate}
\item[\textup{(\texttt{a})}] $v=(i-1,j-1)$ and $w=(i+1,j+1)$;
\item[\textup{(\texttt{b})}] $v=(i+1,j-1)$ and $w=(i-1,j+1)$;
\item[\textup{(\texttt{c})}] $v=(i-1,j)$ and $w=(i+1,j)$;
\item[\textup{(\texttt{d})}] $v=(i,j-1)$ and $w=(i,j+1)$;
\item[\textup{(\texttt{e})}] $v=(i,j-1)$ and $w=(i-1,j)$;
\item[\textup{(\texttt{f})}] $v=(i,j-1)$ and $w=(i+1,j)$;
\item[\textup{(\texttt{g})}] $v=(i+1,j)$ and $w=(i,j+1)$;
\item[\textup{(\texttt{h})}] $v=(i-1,j)$ and $w=(i,j+1)$.
\end{enumerate}
\end{minipage}
\hspace{3cm} 
\begin{minipage}{.5\textwidth}
\begin{tikzpicture}[scale=0.8pt,x=0.75pt,y=0.75pt,yscale=-1,xscale=1]

\draw  [fill={rgb, 255:red, 255; green, 255; blue, 255 }  ,fill opacity=1 ] (81,0) -- (101,0) -- (101,20) -- (81,20) -- cycle ;
\draw  [fill={rgb, 255:red, 255; green, 255; blue, 255 }  ,fill opacity=1 ] (101,0) -- (121,0) -- (121,20) -- (101,20) -- cycle ;
\draw  [fill={rgb, 255:red, 208; green, 2; blue, 27 }  ,fill opacity=1 ] (121,0) -- (141,0) -- (141,20) -- (121,20) -- cycle ;
\draw  [fill={rgb, 255:red, 255; green, 255; blue, 255 }  ,fill opacity=1 ] (81,20) -- (101,20) -- (101,40) -- (81,40) -- cycle ;
\draw  [fill={rgb, 255:red, 245; green, 166; blue, 35 }  ,fill opacity=1 ] (101,20) -- (121,20) -- (121,40) -- (101,40) -- cycle ;
\draw  [fill={rgb, 255:red, 255; green, 255; blue, 255 }  ,fill opacity=1 ] (121,20) -- (141,20) -- (141,40) -- (121,40) -- cycle ;
\draw  [fill={rgb, 255:red, 208; green, 2; blue, 27 }  ,fill opacity=1 ] (81,40) -- (101,40) -- (101,60) -- (81,60) -- cycle ;
\draw  [fill={rgb, 255:red, 255; green, 255; blue, 255 }  ,fill opacity=1 ] (121,40) -- (141,40) -- (141,60) -- (121,60) -- cycle ;
\draw  [fill={rgb, 255:red, 255; green, 255; blue, 255 }  ,fill opacity=1 ] (101,40) -- (121,40) -- (121,60) -- (101,60) -- cycle ;

\draw  [fill={rgb, 255:red, 208; green, 2; blue, 27 }  ,fill opacity=1 ] (0,0) -- (20,0) -- (20,20) -- (0,20) -- cycle ;
\draw  [fill={rgb, 255:red, 255; green, 255; blue, 255 }  ,fill opacity=1 ] (20,0) -- (40,0) -- (40,20) -- (20,20) -- cycle ;
\draw  [fill={rgb, 255:red, 255; green, 255; blue, 255 }  ,fill opacity=1 ] (40,0) -- (60,0) -- (60,20) -- (40,20) -- cycle ;
\draw  [fill={rgb, 255:red, 255; green, 255; blue, 255 }  ,fill opacity=1 ] (0,20) -- (20,20) -- (20,40) -- (0,40) -- cycle ;
\draw  [fill={rgb, 255:red, 245; green, 166; blue, 35 }  ,fill opacity=1 ] (20,20) -- (40,20) -- (40,40) -- (20,40) -- cycle ;
\draw  [fill={rgb, 255:red, 255; green, 255; blue, 255 }  ,fill opacity=1 ] (40,20) -- (60,20) -- (60,40) -- (40,40) -- cycle ;
\draw  [fill={rgb, 255:red, 255; green, 255; blue, 255 }  ,fill opacity=1 ] (0,40) -- (20,40) -- (20,60) -- (0,60) -- cycle ;
\draw  [fill={rgb, 255:red, 208; green, 2; blue, 27 }  ,fill opacity=1 ] (40,40) -- (60,40) -- (60,60) -- (40,60) -- cycle ;
\draw  [fill={rgb, 255:red, 255; green, 255; blue, 255 }  ,fill opacity=1 ] (20,40) -- (40,40) -- (40,60) -- (20,60) -- cycle ;

\draw  [fill={rgb, 255:red, 255; green, 255; blue, 255 }  ,fill opacity=1 ] (80,100) -- (100,100) -- (100,120) -- (80,120) -- cycle ;
\draw  [fill={rgb, 255:red, 208; green, 2; blue, 27 }  ,fill opacity=1 ] (100,100) -- (120,100) -- (120,120) -- (100,120) -- cycle ;
\draw  [fill={rgb, 255:red, 255; green, 255; blue, 255 }  ,fill opacity=1 ] (120,100) -- (140,100) -- (140,120) -- (120,120) -- cycle ;
\draw  [fill={rgb, 255:red, 208; green, 2; blue, 27 }  ,fill opacity=1 ] (80,120) -- (100,120) -- (100,140) -- (80,140) -- cycle ;
\draw  [fill={rgb, 255:red, 245; green, 166; blue, 35 }  ,fill opacity=1 ] (100,120) -- (120,120) -- (120,140) -- (100,140) -- cycle ;
\draw  [fill={rgb, 255:red, 255; green, 255; blue, 255 }  ,fill opacity=1 ] (120,120) -- (140,120) -- (140,140) -- (120,140) -- cycle ;
\draw  [fill={rgb, 255:red, 255; green, 255; blue, 255 }  ,fill opacity=1 ] (80,140) -- (100,140) -- (100,160) -- (80,160) -- cycle ;
\draw  [fill={rgb, 255:red, 255; green, 255; blue, 255 }  ,fill opacity=1 ] (120,140) -- (140,140) -- (140,160) -- (120,160) -- cycle ;
\draw  [fill={rgb, 255:red, 255; green, 255; blue, 255 }  ,fill opacity=1 ] (100,140) -- (120,140) -- (120,160) -- (100,160) -- cycle ;

\draw  [fill={rgb, 255:red, 255; green, 255; blue, 255 }  ,fill opacity=1 ] (160,0) -- (180,0) -- (180,20) -- (160,20) -- cycle ;
\draw  [fill={rgb, 255:red, 208; green, 2; blue, 27 }  ,fill opacity=1 ] (180,0) -- (200,0) -- (200,20) -- (180,20) -- cycle ;
\draw  [fill={rgb, 255:red, 255; green, 255; blue, 255 }  ,fill opacity=1 ] (200,0) -- (220,0) -- (220,20) -- (200,20) -- cycle ;
\draw  [fill={rgb, 255:red, 255; green, 255; blue, 255 }  ,fill opacity=1 ] (160,20) -- (180,20) -- (180,40) -- (160,40) -- cycle ;
\draw  [fill={rgb, 255:red, 245; green, 166; blue, 35 }  ,fill opacity=1 ] (180,20) -- (200,20) -- (200,40) -- (180,40) -- cycle ;
\draw  [fill={rgb, 255:red, 255; green, 255; blue, 255 }  ,fill opacity=1 ] (200,20) -- (220,20) -- (220,40) -- (200,40) -- cycle ;
\draw  [fill={rgb, 255:red, 255; green, 255; blue, 255 }  ,fill opacity=1 ] (160,40) -- (180,40) -- (180,60) -- (160,60) -- cycle ;
\draw  [fill={rgb, 255:red, 255; green, 255; blue, 255 }  ,fill opacity=1 ] (200,40) -- (220,40) -- (220,60) -- (200,60) -- cycle ;
\draw  [fill={rgb, 255:red, 208; green, 2; blue, 27 }  ,fill opacity=1 ] (180,40) -- (200,40) -- (200,60) -- (180,60) -- cycle ;

\draw  [fill={rgb, 255:red, 255; green, 255; blue, 255 }  ,fill opacity=1 ] (0,100) -- (20,100) -- (20,120) -- (0,120) -- cycle ;
\draw  [fill={rgb, 255:red, 255; green, 255; blue, 255 }  ,fill opacity=1 ] (20,100) -- (40,100) -- (40,120) -- (20,120) -- cycle ;
\draw  [fill={rgb, 255:red, 255; green, 255; blue, 255 }  ,fill opacity=1 ] (40,100) -- (60,100) -- (60,120) -- (40,120) -- cycle ;
\draw  [fill={rgb, 255:red, 208; green, 2; blue, 27 }  ,fill opacity=1 ] (0,120) -- (20,120) -- (20,140) -- (0,140) -- cycle ;
\draw  [fill={rgb, 255:red, 245; green, 166; blue, 35 }  ,fill opacity=1 ] (20,120) -- (40,120) -- (40,140) -- (20,140) -- cycle ;
\draw  [fill={rgb, 255:red, 208; green, 2; blue, 27 }  ,fill opacity=1 ] (40,120) -- (60,120) -- (60,140) -- (40,140) -- cycle ;
\draw  [fill={rgb, 255:red, 255; green, 255; blue, 255 }  ,fill opacity=1 ] (0,140) -- (20,140) -- (20,160) -- (0,160) -- cycle ;
\draw  [fill={rgb, 255:red, 255; green, 255; blue, 255 }  ,fill opacity=1 ] (40,140) -- (60,140) -- (60,160) -- (40,160) -- cycle ;
\draw  [fill={rgb, 255:red, 255; green, 255; blue, 255 }  ,fill opacity=1 ] (20,140) -- (40,140) -- (40,160) -- (20,160) -- cycle ;

\draw  [fill={rgb, 255:red, 255; green, 255; blue, 255 }  ,fill opacity=1 ] (160,100) -- (180,100) -- (180,120) -- (160,120) -- cycle ;
\draw  [fill={rgb, 255:red, 255; green, 255; blue, 255 }  ,fill opacity=1 ] (180,100) -- (200,100) -- (200,120) -- (180,120) -- cycle ;
\draw  [fill={rgb, 255:red, 255; green, 255; blue, 255 }  ,fill opacity=1 ] (200,100) -- (220,100) -- (220,120) -- (200,120) -- cycle ;
\draw  [fill={rgb, 255:red, 208; green, 2; blue, 27 }  ,fill opacity=1 ] (160,120) -- (180,120) -- (180,140) -- (160,140) -- cycle ;
\draw  [fill={rgb, 255:red, 245; green, 166; blue, 35 }  ,fill opacity=1 ] (180,120) -- (200,120) -- (200,140) -- (180,140) -- cycle ;
\draw  [fill={rgb, 255:red, 255; green, 255; blue, 255 }  ,fill opacity=1 ] (200,120) -- (220,120) -- (220,140) -- (200,140) -- cycle ;
\draw  [fill={rgb, 255:red, 255; green, 255; blue, 255 }  ,fill opacity=1 ] (160,140) -- (180,140) -- (180,160) -- (160,160) -- cycle ;
\draw  [fill={rgb, 255:red, 255; green, 255; blue, 255 }  ,fill opacity=1 ] (200,140) -- (220,140) -- (220,160) -- (200,160) -- cycle ;
\draw  [fill={rgb, 255:red, 208; green, 2; blue, 27 }  ,fill opacity=1 ] (180,140) -- (200,140) -- (200,160) -- (180,160) -- cycle ;

\draw  [fill={rgb, 255:red, 255; green, 255; blue, 255 }  ,fill opacity=1 ] (40,200) -- (60,200) -- (60,220) -- (40,220) -- cycle ;
\draw  [fill={rgb, 255:red, 255; green, 255; blue, 255 }  ,fill opacity=1 ] (60,200) -- (80,200) -- (80,220) -- (60,220) -- cycle ;
\draw  [fill={rgb, 255:red, 255; green, 255; blue, 255 }  ,fill opacity=1 ] (80,200) -- (100,200) -- (100,220) -- (80,220) -- cycle ;
\draw  [fill={rgb, 255:red, 255; green, 255; blue, 255 }  ,fill opacity=1 ] (40,220) -- (60,220) -- (60,240) -- (40,240) -- cycle ;
\draw  [fill={rgb, 255:red, 245; green, 166; blue, 35 }  ,fill opacity=1 ] (60,220) -- (80,220) -- (80,240) -- (60,240) -- cycle ;
\draw  [fill={rgb, 255:red, 208; green, 2; blue, 27 }  ,fill opacity=1 ] (80,220) -- (100,220) -- (100,240) -- (80,240) -- cycle ;
\draw  [fill={rgb, 255:red, 255; green, 255; blue, 255 }  ,fill opacity=1 ] (40,240) -- (60,240) -- (60,260) -- (40,260) -- cycle ;
\draw  [fill={rgb, 255:red, 255; green, 255; blue, 255 }  ,fill opacity=1 ] (80,240) -- (100,240) -- (100,260) -- (80,260) -- cycle ;
\draw  [fill={rgb, 255:red, 208; green, 2; blue, 27 }  ,fill opacity=1 ] (60,240) -- (80,240) -- (80,260) -- (60,260) -- cycle ;

\draw  [fill={rgb, 255:red, 255; green, 255; blue, 255 }  ,fill opacity=1 ] (120,200) -- (140,200) -- (140,220) -- (120,220) -- cycle ;
\draw  [fill={rgb, 255:red, 208; green, 2; blue, 27 }  ,fill opacity=1 ] (140,200) -- (160,200) -- (160,220) -- (140,220) -- cycle ;
\draw  [fill={rgb, 255:red, 255; green, 255; blue, 255 }  ,fill opacity=1 ] (160,200) -- (180,200) -- (180,220) -- (160,220) -- cycle ;
\draw  [fill={rgb, 255:red, 255; green, 255; blue, 255 }  ,fill opacity=1 ] (120,220) -- (140,220) -- (140,240) -- (120,240) -- cycle ;
\draw  [fill={rgb, 255:red, 245; green, 166; blue, 35 }  ,fill opacity=1 ] (140,220) -- (160,220) -- (160,240) -- (140,240) -- cycle ;
\draw  [fill={rgb, 255:red, 208; green, 2; blue, 27 }  ,fill opacity=1 ] (160,220) -- (180,220) -- (180,240) -- (160,240) -- cycle ;
\draw  [fill={rgb, 255:red, 255; green, 255; blue, 255 }  ,fill opacity=1 ] (120,240) -- (140,240) -- (140,260) -- (120,260) -- cycle ;
\draw  [fill={rgb, 255:red, 255; green, 255; blue, 255 }  ,fill opacity=1 ] (160,240) -- (180,240) -- (180,260) -- (160,260) -- cycle ;
\draw  [fill={rgb, 255:red, 255; green, 255; blue, 255 }  ,fill opacity=1 ] (140,240) -- (160,240) -- (160,260) -- (140,260) -- cycle ;

\draw (22,63.4) node [anchor=north west][inner sep=0.75pt]    {\footnotesize\textup{(\texttt{a})}};
\draw (103,63.4) node [anchor=north west][inner sep=0.75pt]    {\footnotesize\textup{(\texttt{b})}};
\draw (182,63.4) node [anchor=north west][inner sep=0.75pt]    {\footnotesize\textup{(\texttt{c})}};
\draw (22,163.4) node [anchor=north west][inner sep=0.75pt]    {\footnotesize\textup{(\texttt{d})}};
\draw (182,163.4) node [anchor=north west][inner sep=0.75pt]    {\footnotesize\textup{(\texttt{f})}};
\draw (102,163.4) node [anchor=north west][inner sep=0.75pt]    {\footnotesize\textup{(\texttt{e})}};
\draw (142,264.4) node [anchor=north west][inner sep=0.75pt]    {\footnotesize\textup{(\texttt{h})}};
\draw (62,263.4) node [anchor=north west][inner sep=0.75pt]    {\footnotesize\textup{(\texttt{g})}};

\end{tikzpicture}\end{minipage}
    \caption{The contamination rules.}
    \label{fig:rules}
\end{figure}

The power contamination process is described in Algorithm~\ref{alg:power_conta_process}. 
The input is an initial contaminating set of cells \( S \), and the output is the final contaminated set of cells, denoted by \( C(S) \).

As can be observed, if $n=1$, only rule~\textup{(\texttt{d})} may apply. If $n=2$, rules~\textup{(\texttt{a})}--\textup{(\texttt{c})} do not apply.
A corner cell can be contaminated by exactly one rule, whereas a non-corner boundary cell
can be contaminated only by rules~\textup{(\texttt{c})}--\textup{(\texttt{h})}. 

\begin{algorithm}[H]
\caption{Power Contamination Process.}
\label{alg:power_conta_process}
\begin{algorithmic}[1]
\Require An initial contaminating set of cells \( S \subseteq V_{n,m} \)
\Ensure The final contaminated set of cells \( C(S)\subseteq V_{n,m} \)
\State initialize \( C(S) \gets S \);
\While{there exists a cell \( u \in V_{n,m} \setminus C(S) \) such that condition~$(i)$ or~$(ii)$ is satisfied with respect to \( C(S) \)}
    \State \( C(S) \gets C(S) \cup \{u\} \);
\EndWhile
\State \Return \( C(S) \).
\end{algorithmic}
\end{algorithm}

A subset of cells $S\subseteq V_{n,m}$ is called a contaminating set (or a feasible solution to the power contamination problem) if a full contamination of $G(n,m)$ can be achieved from $S$ under the contamination rules~\textup{(\texttt{a})}--\textup{(\texttt{h})}. The \emph{power contamination problem} (PCP) is to find the power contamination number $\gamma_{c}(G(n,m))$, that is, the minimum cardinality of a contaminating set, denoted by
\[
\gamma_{c}(G(n,m)) = \min \left\{  |S| \;\middle|\; S \subseteq V_{n,m} \ \text{is a contaminating set of } G(n,m)  \right\}.
\]
If $|S|=\gamma_{c}(G(n,m))$, then $S$ is called an optimal solution to the PCP.

This propagation process can be viewed as a bootstrap-percolation-type process \cite{Morris2017}, a well-studied class of monotone infection models in statistical physics and probability. In classical bootstrap percolation, a vertex becomes infected once it has at least \(r\) infected neighbors. In contrast, the power contamination model uses more restrictive local geometric rules: a cell becomes contaminated only when two contaminated cells appear in one of the specific relative positions described above.

The main objective in \cite{Ainouche2021} is to determine the minimum size of an initial contamination set \(S\) whose propagation contaminates the whole grid. As part of that work, the authors proposed the following conjecture for the contamination number \(\gamma_c(G(n,m))\), aiming to characterize its exact value for all positive integers \(n\) and \(m\).

\begin{conjecture}[Ainouche and Bouroubi \cite{Ainouche2021}] \label{conj}
Let \( G(n,m) \) be the \( n \times m \) grid graph. For all integers \( n, m \geq 1 \), we have 
\begin{equation}
        \gamma_c(G(n,m)) =
    \begin{cases}
        \max\left\{ \left\lfloor \dfrac{m}{2} \right\rfloor, \left\lfloor \dfrac{n}{2} \right\rfloor \right\} + 1, & \text{if $m$ and $n$ have the same parity}, \\[1.0em]
        \max\left\{ \left\lceil \dfrac{m}{2} \right\rceil, \left\lceil \dfrac{n}{2} \right\rceil \right\} + 1, & \text{otherwise}.
    \end{cases}
\end{equation}
\end{conjecture}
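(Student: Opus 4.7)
My plan is to attempt a two-sided proof of the conjecture: a constructive upper bound exhibiting a contamination set of the claimed cardinality, and a structural lower bound showing that no smaller set suffices. Without loss of generality take $n\le m$, so the target value depends chiefly on $m$, with $n$ entering only through its parity relative to $m$. The argument then naturally splits into the four residue classes of $(n,m)\bmod 2$.

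For the upper bound I would try to build, in each residue class, an explicit set $S$ of the claimed size and verify directly that propagation from $S$ eventually reaches every vertex of $G(n,m)$. Guided by the classical Dorfling--Henning \cite{Dorfling2006} construction for the ordinary power domination number, the natural templates are ``staircase'' placements along a short initial segment plus evenly-spaced witnesses along the long side of the grid. After the first rounds of propagation these should ignite row-by-row (or column-by-column) cascades, and a straightforward induction on the row index would then close the argument, with minor local adjustments at the corners to account for parity.

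The lower bound is the more delicate direction. I would argue by contradiction: assume a contamination set $S$ of size strictly below the claimed bound, and derive a configuration that propagation cannot resolve. The standard approach is to identify obstructions --- boundary rows and columns, corners, or long paths along which propagation can only be initiated by a dedicated vertex of $S$ --- and to apply a counting/pigeonhole argument forcing $|S|$ above some parity-dependent threshold. A monovariant (a potential function decreasing by at most one per propagation step) could alternatively quantify the number of seeds needed.

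The main obstacle I anticipate is precisely this lower bound, and in particular whether the parity-dependent formula is tight for every pair $(n,m)$. The very shape of the conjecture is suspicious: it splits according to the parity of $n+m$ and, more strikingly, uses $\lfloor\cdot\rfloor$ in one case and $\lceil\cdot\rceil$ in the other, whereas clean grid problems usually admit a single uniform formula. For this reason I would begin not with general bounds but with exhaustive computation of $\gamma_c(G(n,m))$ for small values of $n$ and $m$, comparing against the table predicted by the conjecture. A single discrepant pair would already settle the matter: it would refute the conjecture, and the pattern of discrepancies would suggest the genuine exact value that the upper and lower bound arguments should then be reshaped to target.
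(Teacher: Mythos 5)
Your instinct in the final paragraph is the right one, but the proposal as written does not settle the statement either way: you never actually exhibit or verify a discrepant pair, so there is neither a proof nor a disproof here. The conjecture is in fact \emph{false}, and the paper resolves it exactly the way you suspected one should --- by a small counterexample. For $G(4,5)$ the conjecture (different parities, $m>n$) predicts $\lceil 5/2\rceil+1=4$, yet the three cells $(1,1)$, $(3,3)$, $(4,5)$ already contaminate the whole grid, so $\gamma_c(G(4,5))=3$. Had you carried out the computation you propose, this is the first failure you would have hit; the pattern of failures is the entire family ``$m$ odd, $n$ even, $n\geq 4$,'' where the conjectured $\lceil m/2\rceil+1$ exceeds the true value $\lfloor m/2\rfloor+1$ by one. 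This also means the two-sided bounding strategy of your first three paragraphs is doomed in that regime: no lower-bound argument can reach $\lceil m/2\rceil+1$ there, because the bound is simply not attained.

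For completeness, the correct value (the paper's Theorem 2) is $\gamma_c(G(n,m))=\lfloor m/2\rfloor+1$ for all $m\geq n$ except the single exceptional family $n=2$ with $m$ odd, where it is $\lceil m/2\rceil+1$. The lower bound $\lfloor m/2\rfloor+1$ comes from a pigeonhole argument: any smaller set leaves two consecutive columns with no contaminated cell, which blocks propagation (the paper's Lemma 2 and Proposition 5). The matching upper bound comes from an alternating zig-zag diagonal of $\lfloor m/2\rfloor$ seeds plus the corner $(n,m)$. So your planned ``staircase plus evenly spaced witnesses'' construction is close in spirit to the correct upper bound, but the target it should aim at is the parity-free $\lfloor m/2\rfloor+1$, not the conjectured formula. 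The concrete gap in your submission is the absence of the counterexample itself; everything else is contingent on first discovering that the statement to be proved is wrong.
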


\subsection{Terminology and special configurations}

We now introduce additional terminology concerning rows, columns, boundary cells, rectangular blocks, and zig-zag configurations. These notions will allow us to describe both obstruction phenomena and constructive contamination patterns in a concise way.

Let $X\subseteq V_{n,m}$ be a set of cells. We say that $X$ \emph{fully contaminates} the grid $G(n,m)$ if $C(X)=V_{n,m}$. Equivalently, the contamination process starting from $X$ ends with every cell of the grid contaminated.

A row $i\in[n]$ is called \emph{nonempty with respect to $X$} if there exists $j\in[m]$ such that
$(i,j)\in X$; otherwise it is called \emph{empty}. Similarly, a column $j\in[m]$ is called
\emph{nonempty with respect to $X$} if there exists $i\in[n]$ such that $(i,j)\in X$; otherwise
it is called \emph{empty}. When $X=S$ is an initial contamination set, we also say that an empty
column is \emph{clean}.

The \emph{boundary rows} of $G(n,m)$ are the first and last rows, namely rows $1$ and $n$.
The \emph{boundary columns} are the first and last columns, namely columns $1$ and $m$.
A cell $(i,j)\in V_{n,m}$ is called a \emph{boundary cell} if it belongs to a boundary row or a
boundary column, that is, if $i\in\{1,n\}$ or $j\in\{1,m\}$.
A boundary cell is called a \emph{corner cell} if $(i,j)\in\{(1,1),(1,m),(n,1),(n,m)\}$. A cell that is not a boundary cell is called an \emph{interior cell}.

For integers $1\le a\le b\le n$ and $1\le c\le d\le m$, we denote by
\[
R[a,b;c,d]:=\{(i,j)\in V_{n,m}: a\le i\le b,\ c\le j\le d\}
\]
the corresponding \emph{rectangular block} of $G(n,m)$.
A set of cells $X\subseteq V_{n,m}$ is said to form a \emph{rectangle} if
$X=R[a,b;c,d]$ for some integers $a,b,c,d$ as above. If needed for emphasis, such a set may also be called a
\emph{perfect rectangle}.

Assume now that $n\ge 2$. The \emph{zig-zag path} of $G(n,m)$ is the set
\[
Z_{n,m}:=\{(\rho_j,j): 1\le j\le m\},
\]
where the sequence $(\rho_j)_{j=1}^m$ is defined recursively as follows: $\rho_1=1$, and for each $j\ge 1$, the value $\rho_{j+1}$ is obtained from $\rho_j$ by moving one row down
or one row up, reversing direction whenever row $1$ or row $n$ is reached. Equivalently,
the sequence of row indices follows the pattern $1,2,\dots,n,n-1,\dots,2,1,2,\dots$ truncated after $m$ terms. 

An \emph{alternating zig-zag set} is any subset of $Z_{n,m}$ obtained by selecting every other cell
along the zig-zag path. In particular, the set obtained by selecting the first, third, fifth, and so on
cells of $Z_{n,m}$ will be called the \emph{odd alternating zig-zag set}. If one wishes to keep the
terminology used later in the paper, this may also be called a \emph{discrete contaminated zig-zag path}.

Figure~\ref{fig:zig-zig-path} illustrates a zig-zag path in $G(4,9)$ and its associated alternating zig-zag path. In each column, the zig-zag path contains exactly one cell, and the alternating zig-zag path is obtained by selecting every other cell along it.

\begin{figure}[H]
    \centering

\begin{tikzpicture}[scale=0.8,x=0.75pt,y=0.75pt,yscale=-1,xscale=1]

\draw  [fill={rgb, 255:red, 208; green, 2; blue, 27 }  ,fill opacity=1 ] (0,0) -- (20,0) -- (20,20) -- (0,20) -- cycle ;
\draw  [fill={rgb, 255:red, 255; green, 255; blue, 255 }  ,fill opacity=1 ] (20,20) -- (40,20) -- (40,40) -- (20,40) -- cycle ;
\draw  [fill={rgb, 255:red, 208; green, 2; blue, 27 }  ,fill opacity=1 ] (20,20) -- (40,20) -- (40,40) -- (20,40) -- cycle ;
\draw  [fill={rgb, 255:red, 208; green, 2; blue, 27 }  ,fill opacity=1 ] (40,40) -- (60,40) -- (60,60) -- (40,60) -- cycle ;
\draw  [fill={rgb, 255:red, 208; green, 2; blue, 27 }  ,fill opacity=1 ] (60,60) -- (80,60) -- (80,80) -- (60,80) -- cycle ;
\draw  [fill={rgb, 255:red, 255; green, 255; blue, 255 }  ,fill opacity=1 ] (20,0) -- (40,0) -- (40,20) -- (20,20) -- cycle ;
\draw  [fill={rgb, 255:red, 255; green, 255; blue, 255 }  ,fill opacity=1 ] (40,20) -- (60,20) -- (60,40) -- (40,40) -- cycle ;
\draw  [fill={rgb, 255:red, 255; green, 255; blue, 255 }  ,fill opacity=1 ] (60,40) -- (80,40) -- (80,60) -- (60,60) -- cycle ;
\draw  [fill={rgb, 255:red, 255; green, 255; blue, 255 }  ,fill opacity=1 ] (80,60) -- (100,60) -- (100,80) -- (80,80) -- cycle ;
\draw  [fill={rgb, 255:red, 255; green, 255; blue, 255 }  ,fill opacity=1 ] (40,0) -- (60,0) -- (60,20) -- (40,20) -- cycle ;
\draw  [fill={rgb, 255:red, 255; green, 255; blue, 255 }  ,fill opacity=1 ] (60,20) -- (80,20) -- (80,40) -- (60,40) -- cycle ;
\draw  [fill={rgb, 255:red, 208; green, 2; blue, 27 }  ,fill opacity=1 ] (80,40) -- (100,40) -- (100,60) -- (80,60) -- cycle ;
\draw  [fill={rgb, 255:red, 255; green, 255; blue, 255 }  ,fill opacity=1 ] (60,0) -- (80,0) -- (80,20) -- (60,20) -- cycle ;
\draw  [fill={rgb, 255:red, 255; green, 255; blue, 255 }  ,fill opacity=1 ] (80,20) -- (100,20) -- (100,40) -- (80,40) -- cycle ;
\draw  [fill={rgb, 255:red, 255; green, 255; blue, 255 }  ,fill opacity=1 ] (80,0) -- (100,0) -- (100,20) -- (80,20) -- cycle ;
\draw  [fill={rgb, 255:red, 255; green, 255; blue, 255 }  ,fill opacity=1 ] (0,20) -- (20,20) -- (20,40) -- (0,40) -- cycle ;
\draw  [fill={rgb, 255:red, 255; green, 255; blue, 255 }  ,fill opacity=1 ] (20,40) -- (40,40) -- (40,60) -- (20,60) -- cycle ;
\draw  [fill={rgb, 255:red, 255; green, 255; blue, 255 }  ,fill opacity=1 ] (40,60) -- (60,60) -- (60,80) -- (40,80) -- cycle ;
\draw  [fill={rgb, 255:red, 255; green, 255; blue, 255 }  ,fill opacity=1 ] (0,40) -- (20,40) -- (20,60) -- (0,60) -- cycle ;
\draw  [fill={rgb, 255:red, 255; green, 255; blue, 255 }  ,fill opacity=1 ] (0,60) -- (20,60) -- (20,80) -- (0,80) -- cycle ;
\draw  [fill={rgb, 255:red, 255; green, 255; blue, 255 }  ,fill opacity=1 ] (20,60) -- (40,60) -- (40,80) -- (20,80) -- cycle ;
\draw  [fill={rgb, 255:red, 208; green, 2; blue, 27 }  ,fill opacity=1 ] (100,20) -- (120,20) -- (120,40) -- (100,40) -- cycle ;
\draw  [fill={rgb, 255:red, 255; green, 255; blue, 255 }  ,fill opacity=1 ] (120,40) -- (140,40) -- (140,60) -- (120,60) -- cycle ;
\draw  [fill={rgb, 255:red, 255; green, 255; blue, 255 }  ,fill opacity=1 ] (140,60) -- (160,60) -- (160,80) -- (140,80) -- cycle ;
\draw  [fill={rgb, 255:red, 255; green, 255; blue, 255 }  ,fill opacity=1 ] (160,60) -- (180,60) -- (180,80) -- (160,80) -- cycle ;
\draw  [fill={rgb, 255:red, 255; green, 255; blue, 255 }  ,fill opacity=1 ] (100,0) -- (120,0) -- (120,20) -- (100,20) -- cycle ;
\draw  [fill={rgb, 255:red, 255; green, 255; blue, 255 }  ,fill opacity=1 ] (120,20) -- (140,20) -- (140,40) -- (120,40) -- cycle ;
\draw  [fill={rgb, 255:red, 255; green, 255; blue, 255 }  ,fill opacity=1 ] (140,40) -- (160,40) -- (160,60) -- (140,60) -- cycle ;
\draw  [fill={rgb, 255:red, 208; green, 2; blue, 27 }  ,fill opacity=1 ] (120,0) -- (140,0) -- (140,20) -- (120,20) -- cycle ;
\draw  [fill={rgb, 255:red, 208; green, 2; blue, 27 }  ,fill opacity=1 ] (140,20) -- (160,20) -- (160,40) -- (140,40) -- cycle ;
\draw  [fill={rgb, 255:red, 208; green, 2; blue, 27 }  ,fill opacity=1 ] (160,40) -- (180,40) -- (180,60) -- (160,60) -- cycle ;
\draw  [fill={rgb, 255:red, 255; green, 255; blue, 255 }  ,fill opacity=1 ] (140,0) -- (160,0) -- (160,20) -- (140,20) -- cycle ;
\draw  [fill={rgb, 255:red, 255; green, 255; blue, 255 }  ,fill opacity=1 ] (160,20) -- (180,20) -- (180,40) -- (160,40) -- cycle ;
\draw  [fill={rgb, 255:red, 255; green, 255; blue, 255 }  ,fill opacity=1 ] (160,0) -- (180,0) -- (180,20) -- (160,20) -- cycle ;
\draw  [fill={rgb, 255:red, 255; green, 255; blue, 255 }  ,fill opacity=1 ] (100,40) -- (120,40) -- (120,60) -- (100,60) -- cycle ;
\draw  [fill={rgb, 255:red, 255; green, 255; blue, 255 }  ,fill opacity=1 ] (120,60) -- (140,60) -- (140,80) -- (120,80) -- cycle ;
\draw  [fill={rgb, 255:red, 255; green, 255; blue, 255 }  ,fill opacity=1 ] (100,60) -- (120,60) -- (120,80) -- (100,80) -- cycle ;

\draw  [fill={rgb, 255:red, 208; green, 2; blue, 27 }  ,fill opacity=1 ] (250,0) -- (270,0) -- (270,20) -- (250,20) -- cycle ;
\draw  [fill={rgb, 255:red, 255; green, 255; blue, 255 }  ,fill opacity=1 ] (270,20) -- (290,20) -- (290,40) -- (270,40) -- cycle ;
\draw  [fill={rgb, 255:red, 255; green, 255; blue, 255 }  ,fill opacity=1 ] (270,20) -- (290,20) -- (290,40) -- (270,40) -- cycle ;
\draw  [fill={rgb, 255:red, 208; green, 2; blue, 27 }  ,fill opacity=1 ] (290,40) -- (310,40) -- (310,60) -- (290,60) -- cycle ;
\draw  [fill={rgb, 255:red, 255; green, 255; blue, 255 }  ,fill opacity=1 ] (310,60) -- (330,60) -- (330,80) -- (310,80) -- cycle ;
\draw  [fill={rgb, 255:red, 255; green, 255; blue, 255 }  ,fill opacity=1 ] (270,0) -- (290,0) -- (290,20) -- (270,20) -- cycle ;
\draw  [fill={rgb, 255:red, 255; green, 255; blue, 255 }  ,fill opacity=1 ] (290,20) -- (310,20) -- (310,40) -- (290,40) -- cycle ;
\draw  [fill={rgb, 255:red, 255; green, 255; blue, 255 }  ,fill opacity=1 ] (310,40) -- (330,40) -- (330,60) -- (310,60) -- cycle ;
\draw  [fill={rgb, 255:red, 255; green, 255; blue, 255 }  ,fill opacity=1 ] (330,60) -- (350,60) -- (350,80) -- (330,80) -- cycle ;
\draw  [fill={rgb, 255:red, 255; green, 255; blue, 255 }  ,fill opacity=1 ] (290,0) -- (310,0) -- (310,20) -- (290,20) -- cycle ;
\draw  [fill={rgb, 255:red, 255; green, 255; blue, 255 }  ,fill opacity=1 ] (310,20) -- (330,20) -- (330,40) -- (310,40) -- cycle ;
\draw  [fill={rgb, 255:red, 208; green, 2; blue, 27 }  ,fill opacity=1 ] (330,40) -- (350,40) -- (350,60) -- (330,60) -- cycle ;
\draw  [fill={rgb, 255:red, 255; green, 255; blue, 255 }  ,fill opacity=1 ] (310,0) -- (330,0) -- (330,20) -- (310,20) -- cycle ;
\draw  [fill={rgb, 255:red, 255; green, 255; blue, 255 }  ,fill opacity=1 ] (330,20) -- (350,20) -- (350,40) -- (330,40) -- cycle ;
\draw  [fill={rgb, 255:red, 255; green, 255; blue, 255 }  ,fill opacity=1 ] (330,0) -- (350,0) -- (350,20) -- (330,20) -- cycle ;
\draw  [fill={rgb, 255:red, 255; green, 255; blue, 255 }  ,fill opacity=1 ] (250,20) -- (270,20) -- (270,40) -- (250,40) -- cycle ;
\draw  [fill={rgb, 255:red, 255; green, 255; blue, 255 }  ,fill opacity=1 ] (270,40) -- (290,40) -- (290,60) -- (270,60) -- cycle ;
\draw  [fill={rgb, 255:red, 255; green, 255; blue, 255 }  ,fill opacity=1 ] (290,60) -- (310,60) -- (310,80) -- (290,80) -- cycle ;
\draw  [fill={rgb, 255:red, 255; green, 255; blue, 255 }  ,fill opacity=1 ] (250,40) -- (270,40) -- (270,60) -- (250,60) -- cycle ;
\draw  [fill={rgb, 255:red, 255; green, 255; blue, 255 }  ,fill opacity=1 ] (250,60) -- (270,60) -- (270,80) -- (250,80) -- cycle ;
\draw  [fill={rgb, 255:red, 255; green, 255; blue, 255 }  ,fill opacity=1 ] (270,60) -- (290,60) -- (290,80) -- (270,80) -- cycle ;
\draw  [fill={rgb, 255:red, 255; green, 255; blue, 255 }  ,fill opacity=1 ] (350,20) -- (370,20) -- (370,40) -- (350,40) -- cycle ;
\draw  [fill={rgb, 255:red, 255; green, 255; blue, 255 }  ,fill opacity=1 ] (370,40) -- (390,40) -- (390,60) -- (370,60) -- cycle ;
\draw  [fill={rgb, 255:red, 255; green, 255; blue, 255 }  ,fill opacity=1 ] (390,60) -- (410,60) -- (410,80) -- (390,80) -- cycle ;
\draw  [fill={rgb, 255:red, 255; green, 255; blue, 255 }  ,fill opacity=1 ] (410,60) -- (430,60) -- (430,80) -- (410,80) -- cycle ;
\draw  [fill={rgb, 255:red, 255; green, 255; blue, 255 }  ,fill opacity=1 ] (350,0) -- (370,0) -- (370,20) -- (350,20) -- cycle ;
\draw  [fill={rgb, 255:red, 255; green, 255; blue, 255 }  ,fill opacity=1 ] (370,20) -- (390,20) -- (390,40) -- (370,40) -- cycle ;
\draw  [fill={rgb, 255:red, 255; green, 255; blue, 255 }  ,fill opacity=1 ] (390,40) -- (410,40) -- (410,60) -- (390,60) -- cycle ;
\draw  [fill={rgb, 255:red, 208; green, 2; blue, 27 }  ,fill opacity=1 ] (370,0) -- (390,0) -- (390,20) -- (370,20) -- cycle ;
\draw  [fill={rgb, 255:red, 255; green, 255; blue, 255 }  ,fill opacity=1 ] (390,20) -- (410,20) -- (410,40) -- (390,40) -- cycle ;
\draw  [fill={rgb, 255:red, 208; green, 2; blue, 27 }  ,fill opacity=1 ] (410,40) -- (430,40) -- (430,60) -- (410,60) -- cycle ;
\draw  [fill={rgb, 255:red, 255; green, 255; blue, 255 }  ,fill opacity=1 ] (390,0) -- (410,0) -- (410,20) -- (390,20) -- cycle ;
\draw  [fill={rgb, 255:red, 255; green, 255; blue, 255 }  ,fill opacity=1 ] (410,20) -- (430,20) -- (430,40) -- (410,40) -- cycle ;
\draw  [fill={rgb, 255:red, 255; green, 255; blue, 255 }  ,fill opacity=1 ] (410,0) -- (430,0) -- (430,20) -- (410,20) -- cycle ;
\draw  [fill={rgb, 255:red, 255; green, 255; blue, 255 }  ,fill opacity=1 ] (350,40) -- (370,40) -- (370,60) -- (350,60) -- cycle ;
\draw  [fill={rgb, 255:red, 255; green, 255; blue, 255 }  ,fill opacity=1 ] (370,60) -- (390,60) -- (390,80) -- (370,80) -- cycle ;
\draw  [fill={rgb, 255:red, 255; green, 255; blue, 255 }  ,fill opacity=1 ] (350,60) -- (370,60) -- (370,80) -- (350,80) -- cycle ;

\draw (55.5,93) node [anchor=north west][inner sep=0.75pt]  [font=\footnotesize,] [align=left] {zig-zag path};
\draw (255,89) node [anchor=north west][inner sep=0.75pt]  [font=\footnotesize,] [align=left] {alternating zig-zag path};

\end{tikzpicture}
\caption{A zig-zag path in $G(4,9)$ (left) and the corresponding alternating zig-zag path (right).}

    \label{fig:zig-zig-path}
\end{figure}

Let $S\subseteq V_{n,m}$ be a set of cells. We say that $S$ is \emph{supported on the odd columns}
if every cell of $S$ lies in a column of odd index. Equivalently, $S$ contains no cell in any even column.

For a word $w=w_1w_2\cdots w_t$ over a finite alphabet, a \emph{factor} of $w$ is a contiguous block $w_iw_{i+1}\cdots w_j$, with $1\le i\le j\le t$.

\section{Structural Properties of Power Contamination on Grids}
\label{sec:structural}

Before determining the exact value of the power contamination number, we establish several structural properties of the contamination process on grids. Some of these results describe configurations that prevent complete contamination, while others identify local or global patterns that force propagation. Together, they provide the main tools used in the proof of the exact formula.

\subsection{Obstructions and boundary constraints}
We first focus on configurations that obstruct propagation. In particular, we show that certain empty boundary or gap patterns prevent full contamination, and we record basic geometric properties of contaminated and uncontaminated rectangles.

\begin{observation}\label{obse:observations}
The following local configurations force complete contamination:\begin{itemize}

    \item[\textup{(A)}]  Two contaminated cells positioned according to rule~\textup{(\texttt{a})} or~\textup{(\texttt{b})} contaminate \(G(3,3)\).\label{item:remarkonG33}
    \item[\textup{(B)}]  Two contaminated cells positioned according to rule~\textup{(\texttt{c})} (respectively, rule~\textup{(\texttt{d})})   contaminate  \(G(3,1)\) (respectively, \(G(1,3)\)).
    \item[\textup{(C)}]  Two contaminated cells positioned according to any of the rules~\textup{(\texttt{e})}--\textup{(\texttt{h})}   contaminate  \(G(2,2)\).
\end{itemize}
\end{observation}

\begin{lemma}\label{lem:no-two-empty}
If there exists at least one consecutive pair of clean columns (resp., rows), then the power contamination process cannot result in a fully contaminated grid.
\end{lemma}

\begin{proof}
Suppose there is a pair of consecutive clean columns (resp., rows). By the contamination rules~\textup{(\texttt{a})}--\textup{(\texttt{h})}, contaminating a new cell requires exactly two contaminated cells in specific relative positions. In this case, none of the rules can be applied to contaminate any cell within these two consecutive columns (resp., rows), even if both are adjacent to fully contaminated columns (resp., rows). Therefore, at least these two columns (resp., rows) will remain uncontaminated at the end of the process.\qedhere
\end{proof}

Figure~\ref{fig:two-empty-columns} illustrates the obstruction described in Lemma~\ref{lem:no-two-empty}, where a   pair of consecutive empty columns blocks the contamination process: no rule can create a contaminated cell inside this gap, and therefore full contamination is impossible.

\begin{figure}[H]
    \centering

\begin{tikzpicture}[scale=0.8pt,x=0.75pt,y=0.75pt,yscale=-1,xscale=1]

\draw  [fill={rgb, 255:red, 208; green, 2; blue, 27 }  ,fill opacity=1 ] (52,23) -- (72.03,23) -- (72.03,43.02) -- (52,43.02) -- cycle ;
\draw  [fill={rgb, 255:red, 208; green, 2; blue, 27 }  ,fill opacity=1 ] (72.03,23) -- (92.05,23) -- (92.05,43.02) -- (72.03,43.02) -- cycle ;
\draw  [fill={rgb, 255:red, 208; green, 2; blue, 27 }  ,fill opacity=1 ] (92.05,23) -- (112.08,23) -- (112.08,43.02) -- (92.05,43.02) -- cycle ;
\draw  [fill={rgb, 255:red, 255; green, 255; blue, 255 }  ,fill opacity=1 ] (112.08,23) -- (132.1,23) -- (132.1,43.02) -- (112.08,43.02) -- cycle ;
\draw  [fill={rgb, 255:red, 255; green, 255; blue, 255 }  ,fill opacity=1 ] (132.1,23) -- (152.13,23) -- (152.13,43.02) -- (132.1,43.02) -- cycle ;
\draw  [fill={rgb, 255:red, 208; green, 2; blue, 27 }  ,fill opacity=1 ] (152.13,23) -- (172.16,23) -- (172.16,43.02) -- (152.13,43.02) -- cycle ;
\draw  [fill={rgb, 255:red, 208; green, 2; blue, 27 }  ,fill opacity=1 ] (52,43.02) -- (72.03,43.02) -- (72.03,63.04) -- (52,63.04) -- cycle ;
\draw  [fill={rgb, 255:red, 208; green, 2; blue, 27 }  ,fill opacity=1 ] (72.03,43.02) -- (92.05,43.02) -- (92.05,63.04) -- (72.03,63.04) -- cycle ;
\draw  [fill={rgb, 255:red, 208; green, 2; blue, 27 }  ,fill opacity=1 ] (92.05,43.02) -- (112.08,43.02) -- (112.08,63.04) -- (92.05,63.04) -- cycle ;
\draw  [fill={rgb, 255:red, 255; green, 255; blue, 255 }  ,fill opacity=1 ] (112.08,43.02) -- (132.1,43.02) -- (132.1,63.04) -- (112.08,63.04) -- cycle ;
\draw  [fill={rgb, 255:red, 255; green, 255; blue, 255 }  ,fill opacity=1 ] (132.1,43.02) -- (152.13,43.02) -- (152.13,63.04) -- (132.1,63.04) -- cycle ;
\draw  [fill={rgb, 255:red, 208; green, 2; blue, 27 }  ,fill opacity=1 ] (152.13,43.02) -- (172.16,43.02) -- (172.16,63.04) -- (152.13,63.04) -- cycle ;
\draw  [fill={rgb, 255:red, 208; green, 2; blue, 27 }  ,fill opacity=1 ] (52,63.04) -- (72.03,63.04) -- (72.03,83.06) -- (52,83.06) -- cycle ;
\draw  [fill={rgb, 255:red, 208; green, 2; blue, 27 }  ,fill opacity=1 ] (72.03,63.04) -- (92.05,63.04) -- (92.05,83.06) -- (72.03,83.06) -- cycle ;
\draw  [fill={rgb, 255:red, 208; green, 2; blue, 27 }  ,fill opacity=1 ] (92.05,63.04) -- (112.08,63.04) -- (112.08,83.06) -- (92.05,83.06) -- cycle ;
\draw  [fill={rgb, 255:red, 255; green, 255; blue, 255 }  ,fill opacity=1 ] (112.08,63.04) -- (132.1,63.04) -- (132.1,83.06) -- (112.08,83.06) -- cycle ;
\draw  [fill={rgb, 255:red, 255; green, 255; blue, 255 }  ,fill opacity=1 ] (132.1,63.04) -- (152.13,63.04) -- (152.13,83.06) -- (132.1,83.06) -- cycle ;
\draw  [fill={rgb, 255:red, 208; green, 2; blue, 27 }  ,fill opacity=1 ] (152.13,63.04) -- (172.16,63.04) -- (172.16,83.06) -- (152.13,83.06) -- cycle ;
\draw  [fill={rgb, 255:red, 208; green, 2; blue, 27 }  ,fill opacity=1 ] (52,83.06) -- (72.03,83.06) -- (72.03,103.08) -- (52,103.08) -- cycle ;
\draw  [fill={rgb, 255:red, 208; green, 2; blue, 27 }  ,fill opacity=1 ] (72.03,83.06) -- (92.05,83.06) -- (92.05,103.08) -- (72.03,103.08) -- cycle ;
\draw  [fill={rgb, 255:red, 208; green, 2; blue, 27 }  ,fill opacity=1 ] (92.05,83.06) -- (112.08,83.06) -- (112.08,103.08) -- (92.05,103.08) -- cycle ;
\draw  [fill={rgb, 255:red, 255; green, 255; blue, 255 }  ,fill opacity=1 ] (112.08,83.06) -- (132.1,83.06) -- (132.1,103.08) -- (112.08,103.08) -- cycle ;
\draw  [fill={rgb, 255:red, 255; green, 255; blue, 255 }  ,fill opacity=1 ] (132.1,83.06) -- (152.13,83.06) -- (152.13,103.08) -- (132.1,103.08) -- cycle ;
\draw  [fill={rgb, 255:red, 208; green, 2; blue, 27 }  ,fill opacity=1 ] (152.13,83.06) -- (172.16,83.06) -- (172.16,103.08) -- (152.13,103.08) -- cycle ;
\draw  [fill={rgb, 255:red, 208; green, 2; blue, 27 }  ,fill opacity=1 ] (212.21,83.06) -- (232.24,83.06) -- (232.24,103.08) -- (212.21,103.08) -- cycle ;
\draw  [fill={rgb, 255:red, 208; green, 2; blue, 27 }  ,fill opacity=1 ] (212.21,63.04) -- (232.24,63.04) -- (232.24,83.06) -- (212.21,83.06) -- cycle ;
\draw  [fill={rgb, 255:red, 208; green, 2; blue, 27 }  ,fill opacity=1 ] (212.21,43.02) -- (232.24,43.02) -- (232.24,63.04) -- (212.21,63.04) -- cycle ;
\draw  [fill={rgb, 255:red, 208; green, 2; blue, 27 }  ,fill opacity=1 ] (212.21,23) -- (232.24,23) -- (232.24,43.02) -- (212.21,43.02) -- cycle ;
\draw  [fill={rgb, 255:red, 208; green, 2; blue, 27 }  ,fill opacity=1 ] (172.16,83.06) -- (192.18,83.06) -- (192.18,103.08) -- (172.16,103.08) -- cycle ;
\draw  [fill={rgb, 255:red, 208; green, 2; blue, 27 }  ,fill opacity=1 ] (172.16,63.04) -- (192.18,63.04) -- (192.18,83.06) -- (172.16,83.06) -- cycle ;
\draw  [fill={rgb, 255:red, 208; green, 2; blue, 27 }  ,fill opacity=1 ] (172.16,43.02) -- (192.18,43.02) -- (192.18,63.04) -- (172.16,63.04) -- cycle ;
\draw  [fill={rgb, 255:red, 208; green, 2; blue, 27 }  ,fill opacity=1 ] (172.16,23) -- (192.18,23) -- (192.18,43.02) -- (172.16,43.02) -- cycle ;
\draw  [fill={rgb, 255:red, 208; green, 2; blue, 27 }  ,fill opacity=1 ] (192.18,23) -- (212.21,23) -- (212.21,43.02) -- (192.18,43.02) -- cycle ;
\draw  [fill={rgb, 255:red, 208; green, 2; blue, 27 }  ,fill opacity=1 ] (192.18,43.02) -- (212.21,43.02) -- (212.21,63.04) -- (192.18,63.04) -- cycle ;
\draw  [fill={rgb, 255:red, 208; green, 2; blue, 27 }  ,fill opacity=1 ] (192.18,63.04) -- (212.21,63.04) -- (212.21,83.06) -- (192.18,83.06) -- cycle ;
\draw  [fill={rgb, 255:red, 208; green, 2; blue, 27 }  ,fill opacity=1 ] (192.18,83.06) -- (212.21,83.06) -- (212.21,103.08) -- (192.18,103.08) -- cycle ;

\draw  [fill={rgb, 255:red, 208; green, 2; blue, 27 }  ,fill opacity=1 ] (323,23) -- (343.03,23) -- (343.03,43.02) -- (323,43.02) -- cycle ;
\draw  [fill={rgb, 255:red, 208; green, 2; blue, 27 }  ,fill opacity=1 ] (343.03,23) -- (363.05,23) -- (363.05,43.02) -- (343.03,43.02) -- cycle ;
\draw  [fill={rgb, 255:red, 208; green, 2; blue, 27 }  ,fill opacity=1 ] (363.05,23) -- (383.08,23) -- (383.08,43.02) -- (363.05,43.02) -- cycle ;
\draw  [fill={rgb, 255:red, 208; green, 2; blue, 27 }  ,fill opacity=1 ] (383.08,23) -- (403.1,23) -- (403.1,43.02) -- (383.08,43.02) -- cycle ;
\draw  [fill={rgb, 255:red, 208; green, 2; blue, 27 }  ,fill opacity=1 ] (403.1,23) -- (423.13,23) -- (423.13,43.02) -- (403.1,43.02) -- cycle ;
\draw  [fill={rgb, 255:red, 208; green, 2; blue, 27 }  ,fill opacity=1 ] (423.13,23) -- (443.16,23) -- (443.16,43.02) -- (423.13,43.02) -- cycle ;
\draw  [fill={rgb, 255:red, 255; green, 255; blue, 255 }  ,fill opacity=1 ] (323,43.02) -- (343.03,43.02) -- (343.03,63.04) -- (323,63.04) -- cycle ;
\draw  [fill={rgb, 255:red, 255; green, 255; blue, 255 }  ,fill opacity=1 ] (343.03,43.02) -- (363.05,43.02) -- (363.05,63.04) -- (343.03,63.04) -- cycle ;
\draw  [fill={rgb, 255:red, 255; green, 255; blue, 255 }  ,fill opacity=1 ] (363.05,43.02) -- (383.08,43.02) -- (383.08,63.04) -- (363.05,63.04) -- cycle ;
\draw  [fill={rgb, 255:red, 255; green, 255; blue, 255 }  ,fill opacity=1 ] (383.08,43.02) -- (403.1,43.02) -- (403.1,63.04) -- (383.08,63.04) -- cycle ;
\draw  [fill={rgb, 255:red, 255; green, 255; blue, 255 }  ,fill opacity=1 ] (403.1,43.02) -- (423.13,43.02) -- (423.13,63.04) -- (403.1,63.04) -- cycle ;
\draw  [fill={rgb, 255:red, 255; green, 255; blue, 255 }  ,fill opacity=1 ] (423.13,43.02) -- (443.16,43.02) -- (443.16,63.04) -- (423.13,63.04) -- cycle ;
\draw  [fill={rgb, 255:red, 255; green, 255; blue, 255 }  ,fill opacity=1 ] (323,63.04) -- (343.03,63.04) -- (343.03,83.06) -- (323,83.06) -- cycle ;
\draw  [fill={rgb, 255:red, 255; green, 255; blue, 255 }  ,fill opacity=1 ] (343.03,63.04) -- (363.05,63.04) -- (363.05,83.06) -- (343.03,83.06) -- cycle ;
\draw  [fill={rgb, 255:red, 255; green, 255; blue, 255 }  ,fill opacity=1 ] (363.05,63.04) -- (383.08,63.04) -- (383.08,83.06) -- (363.05,83.06) -- cycle ;
\draw  [fill={rgb, 255:red, 255; green, 255; blue, 255 }  ,fill opacity=1 ] (383.08,63.04) -- (403.1,63.04) -- (403.1,83.06) -- (383.08,83.06) -- cycle ;
\draw  [fill={rgb, 255:red, 255; green, 255; blue, 255 }  ,fill opacity=1 ] (403.1,63.04) -- (423.13,63.04) -- (423.13,83.06) -- (403.1,83.06) -- cycle ;
\draw  [fill={rgb, 255:red, 255; green, 255; blue, 255 }  ,fill opacity=1 ] (423.13,63.04) -- (443.16,63.04) -- (443.16,83.06) -- (423.13,83.06) -- cycle ;
\draw  [fill={rgb, 255:red, 208; green, 2; blue, 27 }  ,fill opacity=1 ] (323,83.06) -- (343.03,83.06) -- (343.03,103.08) -- (323,103.08) -- cycle ;
\draw  [fill={rgb, 255:red, 208; green, 2; blue, 27 }  ,fill opacity=1 ] (343.03,83.06) -- (363.05,83.06) -- (363.05,103.08) -- (343.03,103.08) -- cycle ;
\draw  [fill={rgb, 255:red, 208; green, 2; blue, 27 }  ,fill opacity=1 ] (363.05,83.06) -- (383.08,83.06) -- (383.08,103.08) -- (363.05,103.08) -- cycle ;
\draw  [fill={rgb, 255:red, 208; green, 2; blue, 27 }  ,fill opacity=1 ] (383.08,83.06) -- (403.1,83.06) -- (403.1,103.08) -- (383.08,103.08) -- cycle ;
\draw  [fill={rgb, 255:red, 208; green, 2; blue, 27 }  ,fill opacity=1 ] (403.1,83.06) -- (423.13,83.06) -- (423.13,103.08) -- (403.1,103.08) -- cycle ;
\draw  [fill={rgb, 255:red, 208; green, 2; blue, 27 }  ,fill opacity=1 ] (423.13,83.06) -- (443.16,83.06) -- (443.16,103.08) -- (423.13,103.08) -- cycle ;
\draw  [fill={rgb, 255:red, 208; green, 2; blue, 27 }  ,fill opacity=1 ] (483.21,83.06) -- (503.24,83.06) -- (503.24,103.08) -- (483.21,103.08) -- cycle ;
\draw  [fill={rgb, 255:red, 255; green, 255; blue, 255 }  ,fill opacity=1 ] (483.21,63.04) -- (503.24,63.04) -- (503.24,83.06) -- (483.21,83.06) -- cycle ;
\draw  [fill={rgb, 255:red, 255; green, 255; blue, 255 }  ,fill opacity=1 ] (483.21,43.02) -- (503.24,43.02) -- (503.24,63.04) -- (483.21,63.04) -- cycle ;
\draw  [fill={rgb, 255:red, 208; green, 2; blue, 27 }  ,fill opacity=1 ] (483.21,23) -- (503.24,23) -- (503.24,43.02) -- (483.21,43.02) -- cycle ;
\draw  [fill={rgb, 255:red, 208; green, 2; blue, 27 }  ,fill opacity=1 ] (443.16,83.06) -- (463.18,83.06) -- (463.18,103.08) -- (443.16,103.08) -- cycle ;
\draw  [fill={rgb, 255:red, 255; green, 255; blue, 255 }  ,fill opacity=1 ] (443.16,63.04) -- (463.18,63.04) -- (463.18,83.06) -- (443.16,83.06) -- cycle ;
\draw  [fill={rgb, 255:red, 255; green, 255; blue, 255 }  ,fill opacity=1 ] (443.16,43.02) -- (463.18,43.02) -- (463.18,63.04) -- (443.16,63.04) -- cycle ;
\draw  [fill={rgb, 255:red, 208; green, 2; blue, 27 }  ,fill opacity=1 ] (443.16,23) -- (463.18,23) -- (463.18,43.02) -- (443.16,43.02) -- cycle ;
\draw  [fill={rgb, 255:red, 208; green, 2; blue, 27 }  ,fill opacity=1 ] (463.18,23) -- (483.21,23) -- (483.21,43.02) -- (463.18,43.02) -- cycle ;
\draw  [fill={rgb, 255:red, 255; green, 255; blue, 255 }  ,fill opacity=1 ] (463.18,43.02) -- (483.21,43.02) -- (483.21,63.04) -- (463.18,63.04) -- cycle ;
\draw  [fill={rgb, 255:red, 255; green, 255; blue, 255 }  ,fill opacity=1 ] (463.18,63.04) -- (483.21,63.04) -- (483.21,83.06) -- (463.18,83.06) -- cycle ;
\draw  [fill={rgb, 255:red, 208; green, 2; blue, 27 }  ,fill opacity=1 ] (463.18,83.06) -- (483.21,83.06) -- (483.21,103.08) -- (463.18,103.08) -- cycle ;

\end{tikzpicture}

    \caption{A pair of consecutive empty columns blocks the contamination process.}
    \label{fig:two-empty-columns}
\end{figure}

\begin{proposition}\label{prop:rectangular-boundary}
Let $X$ be the set of contaminated cells at some stage of the contamination process on $G(n,m)$.

\begin{enumerate}
    \item[\textup{(A)}] If $X$ forms a rectangle, then no further cell can be contaminated.

    \item[\textup{(B)}] If the uncontaminated set $V_{n,m}\setminus X$ forms a rectangle and two adjacent outer
    sides of this rectangle are completely contaminated, then the process continues until the whole
    rectangle is contaminated.

    \item[\textup{(C)}] If some boundary row or some boundary column contains no cell of the initial contamination
    set $S$, then $S$ does not fully contaminate the grid.
\end{enumerate}
\end{proposition}

\begin{proof}
  Suppose that $X=R[a,b;c,d]$ is a rectangular block, and let $u\notin X$.
    If $u$ could be contaminated, then two contaminated cells of $X$ would have to satisfy one of
    the rules~(a)--(h) with respect to $u$.

    However, in each of the rules~(a)--(h), the target cell lies geometrically between, or at the
    corner determined by, the two contaminating cells. Since $X$ is a rectangle, every such target
    cell necessarily also belongs to $X$. Therefore no cell outside $X$ can be contaminated, and the
    process stops. This proves \textup{(A)}.

     Let $U:=V_{n,m}\setminus X$, and suppose that $U=R[a,b;c,d]$ is a rectangular block.
    By symmetry, we may assume that the cells immediately to the right of $U$ and immediately below
    $U$ are contaminated, that is, the outer sides $\{(i,d+1): a\le i\le b\}$ and $\{(b+1,j): c\le j\le d\}$ are fully contaminated whenever these indices are defined.

    We prove that all cells of $U$ become contaminated by induction on $\delta(i,j):=(b-i)+(d-j)$.

    For $\delta(i,j)=0$, we have $(i,j)=(b,d)$. Since $(b,d+1)$ and $(b+1,d)$ are contaminated,
    rule~(g) contaminates $(b,d)$.

    Now let $(i,j)\in U$ with $\delta(i,j)\ge 1$, and assume inductively that every cell
    $(i',j')\in U$ with $\delta(i',j')<\delta(i,j)$ is already contaminated.
    Then either $(i+1,j)$ is contaminated (if $i<b$), or it lies on the contaminated outer side
    below $U$; similarly, either $(i,j+1)$ is contaminated (if $j<d$), or it lies on the contaminated
    outer side to the right of $U$. Hence rule~(g) contaminates $(i,j)$. This completes the proof of \textup{(B)}. 

    Therefore every cell of $U$ becomes contaminated, and so the whole grid becomes contaminated.

     By symmetry, it suffices to consider the case where the top row contains no cell of $S$.
    We show that no cell of the top row can ever become contaminated.

    Indeed, let $(1,j)$ be a cell in the top row. Since there is no row above it, rules~(a), (b),
    (c), and~(h) cannot apply to $(1,j)$. The remaining applicable rules are~(d), (f), and~(g),
    and each of them requires at least one already contaminated cell in the top row, namely
    $(1,j-1)$ or $(1,j+1)$.
    Thus a first contaminated cell in the top row cannot appear.

    Since the top row is initially empty and cannot acquire a first contaminated cell, it remains
    uncontaminated throughout the process. Consequently, the grid cannot become fully contaminated. This completes the proof of \textup{(C)}.\qedhere
\end{proof}

Figure~\ref{fig:rectangular-boundary} illustrates the three parts of Proposition~\ref{prop:rectangular-boundary}: (A) a contaminated rectangle that cannot expand; (B) a rectangular hole with two adjacent contaminated outer sides that is eventually filled; (C) an empty boundary row that prevents full contamination

\begin{figure}[H]
    \centering

\begin{tikzpicture}[scale=0.8,x=0.75pt,y=0.75pt,yscale=-1,xscale=1]

\draw  [fill={rgb, 255:red, 208; green, 2; blue, 27 }  ,fill opacity=1 ] (0,0) -- (20,0) -- (20,20) -- (0,20) -- cycle ;
\draw  [fill={rgb, 255:red, 255; green, 255; blue, 255 }  ,fill opacity=1 ] (20,20) -- (40,20) -- (40,40) -- (20,40) -- cycle ;
\draw  [fill={rgb, 255:red, 208; green, 2; blue, 27 }  ,fill opacity=1 ] (20,20) -- (40,20) -- (40,40) -- (20,40) -- cycle ;
\draw  [fill={rgb, 255:red, 208; green, 2; blue, 27 }  ,fill opacity=1 ] (20,0) -- (40,0) -- (40,20) -- (20,20) -- cycle ;
\draw  [fill={rgb, 255:red, 255; green, 255; blue, 255 }  ,fill opacity=1 ] (40,40) -- (60,40) -- (60,60) -- (40,60) -- cycle ;
\draw  [fill={rgb, 255:red, 255; green, 255; blue, 255 }  ,fill opacity=1 ] (40,20) -- (60,20) -- (60,40) -- (40,40) -- cycle ;
\draw  [fill={rgb, 255:red, 255; green, 255; blue, 255 }  ,fill opacity=1 ] (60,40) -- (80,40) -- (80,60) -- (60,60) -- cycle ;
\draw  [fill={rgb, 255:red, 255; green, 255; blue, 255 }  ,fill opacity=1 ] (40,0) -- (60,0) -- (60,20) -- (40,20) -- cycle ;
\draw  [fill={rgb, 255:red, 255; green, 255; blue, 255 }  ,fill opacity=1 ] (60,20) -- (80,20) -- (80,40) -- (60,40) -- cycle ;
\draw  [fill={rgb, 255:red, 255; green, 255; blue, 255 }  ,fill opacity=1 ] (60,0) -- (80,0) -- (80,20) -- (60,20) -- cycle ;
\draw  [fill={rgb, 255:red, 208; green, 2; blue, 27 }  ,fill opacity=1 ] (0,20) -- (20,20) -- (20,40) -- (0,40) -- cycle ;
\draw  [fill={rgb, 255:red, 255; green, 255; blue, 255 }  ,fill opacity=1 ] (20,40) -- (40,40) -- (40,60) -- (20,60) -- cycle ;
\draw  [fill={rgb, 255:red, 255; green, 255; blue, 255 }  ,fill opacity=1 ] (0,40) -- (20,40) -- (20,60) -- (0,60) -- cycle ;

\draw  [fill={rgb, 255:red, 255; green, 255; blue, 255 }  ,fill opacity=1 ] (150,0) -- (170,0) -- (170,20) -- (150,20) -- cycle ;
\draw  [fill={rgb, 255:red, 255; green, 255; blue, 255 }  ,fill opacity=1 ] (170,20) -- (190,20) -- (190,40) -- (170,40) -- cycle ;
\draw  [fill={rgb, 255:red, 255; green, 255; blue, 255 }  ,fill opacity=1 ] (170,20) -- (190,20) -- (190,40) -- (170,40) -- cycle ;
\draw  [fill={rgb, 255:red, 255; green, 255; blue, 255 }  ,fill opacity=1 ] (170,0) -- (190,0) -- (190,20) -- (170,20) -- cycle ;
\draw  [fill={rgb, 255:red, 208; green, 2; blue, 27 }  ,fill opacity=1 ] (190,40) -- (210,40) -- (210,60) -- (190,60) -- cycle ;
\draw  [fill={rgb, 255:red, 208; green, 2; blue, 27 }  ,fill opacity=1 ] (190,20) -- (210,20) -- (210,40) -- (190,40) -- cycle ;
\draw  [fill={rgb, 255:red, 208; green, 2; blue, 27 }  ,fill opacity=1 ] (210,40) -- (230,40) -- (230,60) -- (210,60) -- cycle ;
\draw  [fill={rgb, 255:red, 208; green, 2; blue, 27 }  ,fill opacity=1 ] (190,0) -- (210,0) -- (210,20) -- (190,20) -- cycle ;
\draw  [fill={rgb, 255:red, 208; green, 2; blue, 27 }  ,fill opacity=1 ] (210,20) -- (230,20) -- (230,40) -- (210,40) -- cycle ;
\draw  [fill={rgb, 255:red, 208; green, 2; blue, 27 }  ,fill opacity=1 ] (210,0) -- (230,0) -- (230,20) -- (210,20) -- cycle ;
\draw  [fill={rgb, 255:red, 255; green, 255; blue, 255 }  ,fill opacity=1 ] (150,20) -- (170,20) -- (170,40) -- (150,40) -- cycle ;
\draw  [fill={rgb, 255:red, 208; green, 2; blue, 27 }  ,fill opacity=1 ] (170,40) -- (190,40) -- (190,60) -- (170,60) -- cycle ;
\draw  [fill={rgb, 255:red, 208; green, 2; blue, 27 }  ,fill opacity=1 ] (150,40) -- (170,40) -- (170,60) -- (150,60) -- cycle ;

\draw  [fill={rgb, 255:red, 208; green, 2; blue, 27 }  ,fill opacity=1 ] (300,0) -- (320,0) -- (320,20) -- (300,20) -- cycle ;
\draw  [fill={rgb, 255:red, 208; green, 2; blue, 27 }  ,fill opacity=1 ] (320,20) -- (340,20) -- (340,40) -- (320,40) -- cycle ;
\draw  [fill={rgb, 255:red, 208; green, 2; blue, 27 }  ,fill opacity=1 ] (320,20) -- (340,20) -- (340,40) -- (320,40) -- cycle ;
\draw  [fill={rgb, 255:red, 208; green, 2; blue, 27 }  ,fill opacity=1 ] (320,0) -- (340,0) -- (340,20) -- (320,20) -- cycle ;
\draw  [fill={rgb, 255:red, 255; green, 255; blue, 255 }  ,fill opacity=1 ] (340,40) -- (360,40) -- (360,60) -- (340,60) -- cycle ;
\draw  [fill={rgb, 255:red, 208; green, 2; blue, 27 }  ,fill opacity=1 ] (340,20) -- (360,20) -- (360,40) -- (340,40) -- cycle ;
\draw  [fill={rgb, 255:red, 255; green, 255; blue, 255 }  ,fill opacity=1 ] (360,40) -- (380,40) -- (380,60) -- (360,60) -- cycle ;
\draw  [fill={rgb, 255:red, 208; green, 2; blue, 27 }  ,fill opacity=1 ] (340,0) -- (360,0) -- (360,20) -- (340,20) -- cycle ;
\draw  [fill={rgb, 255:red, 208; green, 2; blue, 27 }  ,fill opacity=1 ] (360,20) -- (380,20) -- (380,40) -- (360,40) -- cycle ;
\draw  [fill={rgb, 255:red, 208; green, 2; blue, 27 }  ,fill opacity=1 ] (360,0) -- (380,0) -- (380,20) -- (360,20) -- cycle ;
\draw  [fill={rgb, 255:red, 208; green, 2; blue, 27 }  ,fill opacity=1 ] (300,20) -- (320,20) -- (320,40) -- (300,40) -- cycle ;
\draw  [fill={rgb, 255:red, 255; green, 255; blue, 255 }  ,fill opacity=1 ] (320,40) -- (340,40) -- (340,60) -- (320,60) -- cycle ;
\draw  [fill={rgb, 255:red, 255; green, 255; blue, 255 }  ,fill opacity=1 ] (300,40) -- (320,40) -- (320,60) -- (300,60) -- cycle ;

\draw (30.5,72) node [anchor=north west][inner sep=0.75pt]  [font=\small,] [align=left] {(A)};
\draw (180.5,72) node [anchor=north west][inner sep=0.75pt]  [font=\small,] [align=left] {(B)};
\draw (330,72) node [anchor=north west][inner sep=0.75pt]  [font=\small,] [align=left] {(C)};

\end{tikzpicture}
    \caption{Illustration of three parts of Proposition~\ref{prop:rectangular-boundary}.}
    \label{fig:rectangular-boundary}
\end{figure}

From Proposition~\ref{prop:rectangular-boundary}(C), we obtain the following result.
\begin{corollary} \label{coro:boundary}
    Each boundary of the grid $G(n,m)$ must contain at least one contaminated cell in $S$ in order for the grid to become fully contaminated.  
    If there exists a  boundary without any contaminated cell in $S$, the grid will never be fully contaminated.
\end{corollary}

\subsection{Propagating configurations}
We next turn to constructive propagation mechanisms. We show that several specific initial configurations, such as diagonal and zig-zag patterns, force the contamination process to spread across the whole grid. These results will later yield explicit upper-bound constructions.

\begin{lemma}\label{lem:diagonal-square}
Let \( G(m,m) \) be a square grid. If the initial contaminating set of cells lies on the main diagonal of the grid, i.e.,
\[
S = \{(i,i) \mid 1 \leq i \leq m\},
\]
then the contamination process results in the entire grid becoming contaminated.
\end{lemma}

\begin{proof}
Starting from the main diagonal, rules \textup{(\texttt{a})}, \textup{(\texttt{b})}, \textup{(\texttt{f})}, and \textup{(\texttt{h})} can be applied to contaminate, at each step, the two rows immediately above and below the already contaminated cells. Repeating this process propagates the contamination outward in both directions until all rows are contaminated. Since each contaminated row subsequently contaminates its entire set of columns by the same rules, the whole grid becomes contaminated. \qedhere 
\end{proof}
Figure~\ref{fig:main-diagonal} illustrates the propagation described in Lemma~\ref{lem:diagonal-square}.
\begin{figure}[H]
    \centering

\tikzset{every picture/.style={line width=0.75pt}} 


\caption{A main-diagonal initial contamination set in the square grid $G(m,m)$ forces full contamination of the grid.}\label{fig:main-diagonal}  
\end{figure}

\begin{lemma}\label{lem:full-column-propagation}
Let $1 \le j < m$, and suppose that column $j$ is fully contaminated. If column $j+1$
contains at least one contaminated cell, then column $j+1$ becomes fully contaminated.
\end{lemma}

\begin{proof}
Let $(r,j+1)$ be a contaminated cell in column $j+1$.

We first show that contamination propagates upward inside column $j+1$.
If $r>1$ and $(r,j+1)$ is contaminated, then $(r-1,j)$ is contaminated because column $j$
is fully contaminated. Hence rule~(f) contaminates $(r-1,j+1)$.
Iterating this argument yields that all cells $(1,j+1),(2,j+1),\dots,(r-1,j+1)$ become contaminated.

Similarly, contamination propagates downward inside column $j+1$.
If $r<n$ and $(r,j+1)$ is contaminated, then $(r+1,j)$ is contaminated, and rule~(e)
contaminates $(r+1,j+1)$. Iterating gives that all cells $(r+1,j+1),(r+2,j+1),\dots,(n,j+1)$ become contaminated.

Therefore every cell of column $j+1$ becomes contaminated, so column $j+1$ is fully contaminated.\qedhere
\end{proof}
\begin{lemma}\label{lem:zigzag-full}
Let $G(n,m)$ be a rectangular grid. Consider an initial contaminating set of cells $S$
forming a zig-zag path that starts from the top-left cell, descends to the bottom row of
the grid at an angle of $\pi/4$, then ascends to the top row at the same angle, and repeats
this pattern across the grid. Such an initial configuration leads to the entire grid becoming
contaminated.
\end{lemma}

\begin{proof}
Let $S=\{(r_j,j)\mid 1\le j\le m\}$,  where $(r_j,j)$ denotes the unique cell of the zig-zag path in column $j$. Since the path starts at the top-left corner and initially descends by one row at each step,
its restriction to the first $n$ columns is exactly the main diagonal $(1,1),(2,2),\dots,(n,n)$  of the square subgrid induced by columns $1,\dots,n$.
Hence, by Lemma~\ref{lem:diagonal-square}, this $n\times n$ square becomes fully contaminated.

We now prove by induction on $j$ that every column $j\in\{n,n+1,\dots,m\}$ becomes fully contaminated.
The base case $j=n$ has already been established.

Assume that column $j$ is fully contaminated for some $j\ge n$ with $j<m$.
By construction of the zig-zag path, column $j+1$ contains the contaminated cell $(r_{j+1},j+1)\in S$.
Therefore, by Lemma~\ref{lem:full-column-propagation}, column $j+1$ becomes fully contaminated. 

By induction, every column from $n$ to $m$ becomes fully contaminated.
Since the first $n$ columns were already fully contaminated, the whole grid $G(n,m)$ becomes fully contaminated. \qedhere
\end{proof}

Figure~\ref{fig:zigzag-propagation} illustrates the propagation described in Lemma~\ref{lem:zigzag-full} on the grid $G(4,9)$.

\begin{figure}[H]
    \centering


       \caption{A zig-zag initial contamination set in the rectangular grid $G(4,9)$ forces full contamination of the grid.}
\label{fig:zigzag-propagation}

\end{figure}

\section{Exact Value of the Power Contamination Number}\label{sec:value}

We now determine the exact value of the power contamination number for every grid \(G(n,m)\) with \(m \ge n \ge 1\). We begin by disproving Conjecture~\ref{conj} through a counterexample, then establish matching upper and lower bounds, and finally derive the exact formula together with some immediate consequences.

\subsection{A counterexample to the former conjecture}

    We  remark that the second part of the conjecture, namely, the case where $n$ and $m$ have different parities does not hold. The following counterexample suffices to disprove it. Consider the grid $G(4,5)$. According to the conjecture, since $n=4$ and $m=5$ have different parities and $m>n$, we should have
\[
\gamma_c(G(4,5)) = \left\lceil \frac{5}{2} \right\rceil + 1 = 3 + 1 = 4.
\]
However, this is not the case, because $G(4,5)$ can be fully contaminated using only three initially contaminated cells, as illustrated in Figure~\ref{fig:counter_example}.
\begin{figure}[H]
    \centering



    \caption{A counterexample to Conjecture~1: the grid $G(4,5)$ can be fully contaminated from an initial set of only three cells.}

    \label{fig:counter_example}
\end{figure}

 \subsection{Upper and lower bounds}
We now derive two complementary estimates for \(\gamma_c(G(n,m))\). The upper bound is obtained from explicit constructive contamination patterns, while the lower bound follows from structural restrictions on the placement of contaminated cells.

\begin{proposition}\label{prop:path-contamination-number}
For every integer $m\ge 1$, the power contamination number of the path-grid $G(1,m)$ satisfies
\begin{equation}
    \gamma_c(G(1,m))=
\begin{cases}
1, & \text{if } m=1,\\[2mm]
\gamma_c(G(1,m-4))+2, & \text{if } m\ge 4,
\end{cases}
\end{equation}
with initial values $\gamma_c(G(1,1))=1$, and  $\gamma_c(G(1,2))=\gamma_c(G(1,3))=2$.

In particular, for all $m\geq 1$
\begin{equation}
    \gamma_c(G(1,m))=\left\lfloor \frac{m}{2}\right\rfloor+1
\end{equation}
\end{proposition}

\begin{proof}
Since $G(1,m)$ has only one row, the only applicable contamination rule is rule~(d).
In particular, the two end cells $(1,1)$ and $(1,m)$ each have only one neighbour, so they can
never become contaminated during the process unless they already belong to the initial set $S$.
Hence every contaminating set must contain both $(1,1)$ and $(1,m)$.

Once these two end cells are fixed, the cells $(1,2)$ and $(1,m-1)$ need not belong to $S$,
since they are contaminated by rule~(d) from the end cells whenever possible. The remaining
portion of the grid is the induced subgrid on the cells $(1,3),(1,4),\dots,(1,m-2)$,  which is isomorphic to $G(1,m-4)$. Therefore, for every $m\ge 4$,
\[
\gamma_c(G(1,m))=\gamma_c(G(1,m-4))+2.
\]
The initial values are immediate: $\gamma_c(G(1,1))=1$, and $\gamma_c(G(1,2))=\gamma_c(G(1,3))=2$.

We now prove the closed formula by induction on $m$.
The formula is easily verified for $m=1,2,3,4$. Let $m\ge 5$, and assume that
\[
\gamma_c(G(1,m-4))=\left\lfloor\frac{m-4}{2}\right\rfloor+1.
\]
Using the recurrence, we obtain
\[
\gamma_c(G(1,m))
=\gamma_c(G(1,m-4))+2
=\left\lfloor\frac{m-4}{2}\right\rfloor+1+2
=\left\lfloor\frac{m}{2}\right\rfloor+1.\qedhere
\]
\end{proof}

The following proposition improves upon the solution in Lemma~\ref{lem:zigzag-full}.

\begin{proposition}\label{prop:alternate-zigzag}
Let $m\ge n\ge 3$. Using the same zig-zag technique as in Lemma~\ref{lem:zigzag-full},
choose the contaminated cells alternately along the zig-zag path: the first cell is chosen,
the next is skipped, the next is chosen, and so on, except that the last cell of the zig-zag
path is always skipped. Additionally, the cell $(n,m)$ is always included in the initial
contaminating set $S$. Then the contamination process ends with a fully contaminated grid.
\end{proposition}

Figure~\ref{fig:examples_of_optimal_sets} illustrates four examples of initial contaminating sets constructed according to the process described in Proposition~\ref{prop:alternate-zigzag}. Moreover, the example given in Figure~\ref{fig:counter_example} shows an initial contaminating set $S$ constructed using Proposition~\ref{prop:alternate-zigzag}.

\begin{figure}[H]
    \centering

\begin{tikzpicture}[scale=0.8pt,x=0.75pt,y=0.75pt,yscale=-1,xscale=1]

\draw  [fill={rgb, 255:red, 208; green, 2; blue, 27 }  ,fill opacity=1 ] (0,0) -- (20,0) -- (20,20) -- (0,20) -- cycle ;
\draw  [fill={rgb, 255:red, 255; green, 255; blue, 255 }  ,fill opacity=1 ] (20,20) -- (40,20) -- (40,40) -- (20,40) -- cycle ;
\draw  [fill={rgb, 255:red, 255; green, 255; blue, 255 }  ,fill opacity=1 ] (20,20) -- (40,20) -- (40,40) -- (20,40) -- cycle ;
\draw  [fill={rgb, 255:red, 208; green, 2; blue, 27 }  ,fill opacity=1 ] (40,40) -- (60,40) -- (60,60) -- (40,60) -- cycle ;
\draw  [fill={rgb, 255:red, 255; green, 255; blue, 255 }  ,fill opacity=1 ] (60,60) -- (80,60) -- (80,80) -- (60,80) -- cycle ;
\draw  [fill={rgb, 255:red, 255; green, 255; blue, 255 }  ,fill opacity=1 ] (20,0) -- (40,0) -- (40,20) -- (20,20) -- cycle ;
\draw  [fill={rgb, 255:red, 255; green, 255; blue, 255 }  ,fill opacity=1 ] (40,20) -- (60,20) -- (60,40) -- (40,40) -- cycle ;
\draw  [fill={rgb, 255:red, 255; green, 255; blue, 255 }  ,fill opacity=1 ] (60,40) -- (80,40) -- (80,60) -- (60,60) -- cycle ;
\draw  [fill={rgb, 255:red, 255; green, 255; blue, 255 }  ,fill opacity=1 ] (80,60) -- (100,60) -- (100,80) -- (80,80) -- cycle ;
\draw  [fill={rgb, 255:red, 255; green, 255; blue, 255 }  ,fill opacity=1 ] (40,0) -- (60,0) -- (60,20) -- (40,20) -- cycle ;
\draw  [fill={rgb, 255:red, 255; green, 255; blue, 255 }  ,fill opacity=1 ] (60,20) -- (80,20) -- (80,40) -- (60,40) -- cycle ;
\draw  [fill={rgb, 255:red, 208; green, 2; blue, 27 }  ,fill opacity=1 ] (80,40) -- (100,40) -- (100,60) -- (80,60) -- cycle ;
\draw  [fill={rgb, 255:red, 255; green, 255; blue, 255 }  ,fill opacity=1 ] (60,0) -- (80,0) -- (80,20) -- (60,20) -- cycle ;
\draw  [fill={rgb, 255:red, 255; green, 255; blue, 255 }  ,fill opacity=1 ] (80,20) -- (100,20) -- (100,40) -- (80,40) -- cycle ;
\draw  [fill={rgb, 255:red, 255; green, 255; blue, 255 }  ,fill opacity=1 ] (80,0) -- (100,0) -- (100,20) -- (80,20) -- cycle ;
\draw  [fill={rgb, 255:red, 255; green, 255; blue, 255 }  ,fill opacity=1 ] (0,20) -- (20,20) -- (20,40) -- (0,40) -- cycle ;
\draw  [fill={rgb, 255:red, 255; green, 255; blue, 255 }  ,fill opacity=1 ] (20,40) -- (40,40) -- (40,60) -- (20,60) -- cycle ;
\draw  [fill={rgb, 255:red, 255; green, 255; blue, 255 }  ,fill opacity=1 ] (40,60) -- (60,60) -- (60,80) -- (40,80) -- cycle ;
\draw  [fill={rgb, 255:red, 255; green, 255; blue, 255 }  ,fill opacity=1 ] (0,40) -- (20,40) -- (20,60) -- (0,60) -- cycle ;
\draw  [fill={rgb, 255:red, 255; green, 255; blue, 255 }  ,fill opacity=1 ] (0,60) -- (20,60) -- (20,80) -- (0,80) -- cycle ;
\draw  [fill={rgb, 255:red, 255; green, 255; blue, 255 }  ,fill opacity=1 ] (20,60) -- (40,60) -- (40,80) -- (20,80) -- cycle ;
\draw  [fill={rgb, 255:red, 255; green, 255; blue, 255 }  ,fill opacity=1 ] (100,20) -- (120,20) -- (120,40) -- (100,40) -- cycle ;
\draw  [fill={rgb, 255:red, 255; green, 255; blue, 255 }  ,fill opacity=1 ] (100,0) -- (120,0) -- (120,20) -- (100,20) -- cycle ;
\draw  [fill={rgb, 255:red, 255; green, 255; blue, 255 }  ,fill opacity=1 ] (100,40) -- (120,40) -- (120,60) -- (100,60) -- cycle ;
\draw  [fill={rgb, 255:red, 208; green, 2; blue, 27 }  ,fill opacity=1 ] (100,60) -- (120,60) -- (120,80) -- (100,80) -- cycle ;

\draw  [fill={rgb, 255:red, 208; green, 2; blue, 27 }  ,fill opacity=1 ] (150,0) -- (170,0) -- (170,20) -- (150,20) -- cycle ;
\draw  [fill={rgb, 255:red, 255; green, 255; blue, 255 }  ,fill opacity=1 ] (170,20) -- (190,20) -- (190,40) -- (170,40) -- cycle ;
\draw  [fill={rgb, 255:red, 255; green, 255; blue, 255 }  ,fill opacity=1 ] (170,20) -- (190,20) -- (190,40) -- (170,40) -- cycle ;
\draw  [fill={rgb, 255:red, 208; green, 2; blue, 27 }  ,fill opacity=1 ] (190,40) -- (210,40) -- (210,60) -- (190,60) -- cycle ;
\draw  [fill={rgb, 255:red, 255; green, 255; blue, 255 }  ,fill opacity=1 ] (210,60) -- (230,60) -- (230,80) -- (210,80) -- cycle ;
\draw  [fill={rgb, 255:red, 255; green, 255; blue, 255 }  ,fill opacity=1 ] (170,0) -- (190,0) -- (190,20) -- (170,20) -- cycle ;
\draw  [fill={rgb, 255:red, 255; green, 255; blue, 255 }  ,fill opacity=1 ] (190,20) -- (210,20) -- (210,40) -- (190,40) -- cycle ;
\draw  [fill={rgb, 255:red, 255; green, 255; blue, 255 }  ,fill opacity=1 ] (210,40) -- (230,40) -- (230,60) -- (210,60) -- cycle ;
\draw  [fill={rgb, 255:red, 255; green, 255; blue, 255 }  ,fill opacity=1 ] (230,40) -- (250,40) -- (250,60) -- (230,60) -- cycle ;
\draw  [fill={rgb, 255:red, 255; green, 255; blue, 255 }  ,fill opacity=1 ] (190,0) -- (210,0) -- (210,20) -- (190,20) -- cycle ;
\draw  [fill={rgb, 255:red, 255; green, 255; blue, 255 }  ,fill opacity=1 ] (210,20) -- (230,20) -- (230,40) -- (210,40) -- cycle ;
\draw  [fill={rgb, 255:red, 208; green, 2; blue, 27 }  ,fill opacity=1 ] (230,60) -- (250,60) -- (250,80) -- (230,80) -- cycle ;
\draw  [fill={rgb, 255:red, 255; green, 255; blue, 255 }  ,fill opacity=1 ] (210,0) -- (230,0) -- (230,20) -- (210,20) -- cycle ;
\draw  [fill={rgb, 255:red, 255; green, 255; blue, 255 }  ,fill opacity=1 ] (230,20) -- (250,20) -- (250,40) -- (230,40) -- cycle ;
\draw  [fill={rgb, 255:red, 255; green, 255; blue, 255 }  ,fill opacity=1 ] (230,0) -- (250,0) -- (250,20) -- (230,20) -- cycle ;
\draw  [fill={rgb, 255:red, 255; green, 255; blue, 255 }  ,fill opacity=1 ] (150,20) -- (170,20) -- (170,40) -- (150,40) -- cycle ;
\draw  [fill={rgb, 255:red, 255; green, 255; blue, 255 }  ,fill opacity=1 ] (170,40) -- (190,40) -- (190,60) -- (170,60) -- cycle ;
\draw  [fill={rgb, 255:red, 255; green, 255; blue, 255 }  ,fill opacity=1 ] (190,60) -- (210,60) -- (210,80) -- (190,80) -- cycle ;
\draw  [fill={rgb, 255:red, 255; green, 255; blue, 255 }  ,fill opacity=1 ] (150,40) -- (170,40) -- (170,60) -- (150,60) -- cycle ;
\draw  [fill={rgb, 255:red, 255; green, 255; blue, 255 }  ,fill opacity=1 ] (150,60) -- (170,60) -- (170,80) -- (150,80) -- cycle ;
\draw  [fill={rgb, 255:red, 255; green, 255; blue, 255 }  ,fill opacity=1 ] (170,60) -- (190,60) -- (190,80) -- (170,80) -- cycle ;

\draw  [fill={rgb, 255:red, 255; green, 255; blue, 255 }  ,fill opacity=1 ] (300,20) -- (320,20) -- (320,40) -- (300,40) -- cycle ;
\draw  [fill={rgb, 255:red, 255; green, 255; blue, 255 }  ,fill opacity=1 ] (300,20) -- (320,20) -- (320,40) -- (300,40) -- cycle ;
\draw  [fill={rgb, 255:red, 208; green, 2; blue, 27 }  ,fill opacity=1 ] (280,20) -- (300,20) -- (300,40) -- (280,40) -- cycle ;
\draw  [fill={rgb, 255:red, 255; green, 255; blue, 255 }  ,fill opacity=1 ] (340,60) -- (360,60) -- (360,80) -- (340,80) -- cycle ;
\draw  [fill={rgb, 255:red, 255; green, 255; blue, 255 }  ,fill opacity=1 ] (320,20) -- (340,20) -- (340,40) -- (320,40) -- cycle ;
\draw  [fill={rgb, 255:red, 255; green, 255; blue, 255 }  ,fill opacity=1 ] (340,40) -- (360,40) -- (360,60) -- (340,60) -- cycle ;
\draw  [fill={rgb, 255:red, 255; green, 255; blue, 255 }  ,fill opacity=1 ] (360,60) -- (380,60) -- (380,80) -- (360,80) -- cycle ;
\draw  [fill={rgb, 255:red, 255; green, 255; blue, 255 }  ,fill opacity=1 ] (340,20) -- (360,20) -- (360,40) -- (340,40) -- cycle ;
\draw  [fill={rgb, 255:red, 208; green, 2; blue, 27 }  ,fill opacity=1 ] (320,60) -- (340,60) -- (340,80) -- (320,80) -- cycle ;
\draw  [fill={rgb, 255:red, 208; green, 2; blue, 27 }  ,fill opacity=1 ] (360,20) -- (380,20) -- (380,40) -- (360,40) -- cycle ;
\draw  [fill={rgb, 255:red, 255; green, 255; blue, 255 }  ,fill opacity=1 ] (320,40) -- (340,40) -- (340,60) -- (320,60) -- cycle ;
\draw  [fill={rgb, 255:red, 255; green, 255; blue, 255 }  ,fill opacity=1 ] (300,40) -- (320,40) -- (320,60) -- (300,60) -- cycle ;
\draw  [fill={rgb, 255:red, 255; green, 255; blue, 255 }  ,fill opacity=1 ] (360,40) -- (380,40) -- (380,60) -- (360,60) -- cycle ;
\draw  [fill={rgb, 255:red, 255; green, 255; blue, 255 }  ,fill opacity=1 ] (280,40) -- (300,40) -- (300,60) -- (280,60) -- cycle ;
\draw  [fill={rgb, 255:red, 255; green, 255; blue, 255 }  ,fill opacity=1 ] (280,60) -- (300,60) -- (300,80) -- (280,80) -- cycle ;
\draw  [fill={rgb, 255:red, 255; green, 255; blue, 255 }  ,fill opacity=1 ] (300,60) -- (320,60) -- (320,80) -- (300,80) -- cycle ;
\draw  [fill={rgb, 255:red, 255; green, 255; blue, 255 }  ,fill opacity=1 ] (380,20) -- (400,20) -- (400,40) -- (380,40) -- cycle ;
\draw  [fill={rgb, 255:red, 255; green, 255; blue, 255 }  ,fill opacity=1 ] (380,40) -- (400,40) -- (400,60) -- (380,60) -- cycle ;
\draw  [fill={rgb, 255:red, 208; green, 2; blue, 27 }  ,fill opacity=1 ] (380,60) -- (400,60) -- (400,80) -- (380,80) -- cycle ;

\draw  [fill={rgb, 255:red, 255; green, 255; blue, 255 }  ,fill opacity=1 ] (450,20) -- (470,20) -- (470,40) -- (450,40) -- cycle ;
\draw  [fill={rgb, 255:red, 255; green, 255; blue, 255 }  ,fill opacity=1 ] (450,20) -- (470,20) -- (470,40) -- (450,40) -- cycle ;
\draw  [fill={rgb, 255:red, 208; green, 2; blue, 27 }  ,fill opacity=1 ] (430,20) -- (450,20) -- (450,40) -- (430,40) -- cycle ;
\draw  [fill={rgb, 255:red, 255; green, 255; blue, 255 }  ,fill opacity=1 ] (490,60) -- (510,60) -- (510,80) -- (490,80) -- cycle ;
\draw  [fill={rgb, 255:red, 255; green, 255; blue, 255 }  ,fill opacity=1 ] (470,20) -- (490,20) -- (490,40) -- (470,40) -- cycle ;
\draw  [fill={rgb, 255:red, 255; green, 255; blue, 255 }  ,fill opacity=1 ] (490,40) -- (510,40) -- (510,60) -- (490,60) -- cycle ;
\draw  [fill={rgb, 255:red, 255; green, 255; blue, 255 }  ,fill opacity=1 ] (510,20) -- (530,20) -- (530,40) -- (510,40) -- cycle ;
\draw  [fill={rgb, 255:red, 255; green, 255; blue, 255 }  ,fill opacity=1 ] (490,20) -- (510,20) -- (510,40) -- (490,40) -- cycle ;
\draw  [fill={rgb, 255:red, 208; green, 2; blue, 27 }  ,fill opacity=1 ] (470,60) -- (490,60) -- (490,80) -- (470,80) -- cycle ;
\draw  [fill={rgb, 255:red, 208; green, 2; blue, 27 }  ,fill opacity=1 ] (510,60) -- (530,60) -- (530,80) -- (510,80) -- cycle ;
\draw  [fill={rgb, 255:red, 255; green, 255; blue, 255 }  ,fill opacity=1 ] (470,40) -- (490,40) -- (490,60) -- (470,60) -- cycle ;
\draw  [fill={rgb, 255:red, 255; green, 255; blue, 255 }  ,fill opacity=1 ] (450,40) -- (470,40) -- (470,60) -- (450,60) -- cycle ;
\draw  [fill={rgb, 255:red, 255; green, 255; blue, 255 }  ,fill opacity=1 ] (510,40) -- (530,40) -- (530,60) -- (510,60) -- cycle ;
\draw  [fill={rgb, 255:red, 255; green, 255; blue, 255 }  ,fill opacity=1 ] (430,40) -- (450,40) -- (450,60) -- (430,60) -- cycle ;
\draw  [fill={rgb, 255:red, 255; green, 255; blue, 255 }  ,fill opacity=1 ] (430,60) -- (450,60) -- (450,80) -- (430,80) -- cycle ;
\draw  [fill={rgb, 255:red, 255; green, 255; blue, 255 }  ,fill opacity=1 ] (450,60) -- (470,60) -- (470,80) -- (450,80) -- cycle ;

\draw (10,103) node [anchor=north west][inner sep=0.75pt]  [font=\footnotesize] [align=left] {$\displaystyle n$ and $\displaystyle m$ even};
\draw (145,103) node [anchor=north west][inner sep=0.75pt]  [font=\footnotesize] [align=left] {$\displaystyle n$ even, $\displaystyle m$ odd};
\draw (285,103) node [anchor=north west][inner sep=0.75pt]  [font=\footnotesize] [align=left] {$\displaystyle n$ odd, $\displaystyle m$ even};
\draw (428,103) node [anchor=north west][inner sep=0.75pt]  [font=\footnotesize] [align=left] {$\displaystyle n$ odd, $\displaystyle m$ odd};

\end{tikzpicture}

    \caption{Examples of initial contaminating sets $S$ constructed according to Proposition~\ref{prop:alternate-zigzag}.
}
    \label{fig:examples_of_optimal_sets}
\end{figure}

\begin{corollary}\label{corollary_upper_bound}
    For all integers $m \geq n \geq 1$, we have 
    \begin{equation}
        \gamma_c (G(n,m)) \leq \left\lfloor \frac{m}{2} \right\rfloor + 1.
    \end{equation}
\end{corollary}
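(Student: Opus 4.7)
The plan is to exhibit an explicit initial configuration $S$ of cardinality at most $\lfloor m/2\rfloor+1$ that contaminates the entire grid, from which the upper bound on $\gamma_c(G(n,m))$ follows immediately by the definition of the contamination number. The case $n=1$ is already covered by the exact formula $\gamma_c(G(1,m))=\lfloor m/2\rfloor+1$ for one-row grids established just above, so only $n\ge 2$ requires a new construction. For $m\ge n\ge 3$ I would take $S$ to be the set produced by Proposition~\ref{proposition_zig_zag_alternative_solution}, namely the alternating selection along the zig-zag diagonal emanating from $(1,1)$ together with the bottom-right corner $(n,m)$; Proposition~\ref{proposition_zig_zag_alternative_solution} itself certifies that such an $S$ is a contaminating set, so only the cardinality of $S$ needs to be controlled. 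The case $n=2$ would be handled by the analogous alternating construction $\{(1,2k+1):0\le k\le\lfloor(m-1)/2\rfloor\}\cup\{(2,m)\}$, whose contamination of $G(2,m)$ follows from a short induction on $m$ using rules (e)--(h) along the two rows.

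To count $|S|$ in the main case, I would observe that the zig-zag diagonal visits exactly one cell per column for a total of $m$ cells along its length. Retaining every second cell starting from $(1,1)$ and always skipping the last cell of the diagonal selects precisely $\lfloor m/2\rfloor$ cells, whose columns are the odd integers $1,3,5,\ldots,2\lfloor m/2\rfloor-1$. Adjoining the corner $(n,m)$ then yields $|S|\le\lfloor m/2\rfloor+1$; in the parity classes where $(n,m)$ happens to coincide with a previously selected diagonal cell, no double-counting occurs and the bound is met with strict inequality, which only strengthens the claim.

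The main obstacle I foresee is not conceptual but bookkeeping: the terminal segment of the zig-zag reaches the boundary at a position that depends both on the quotient of $m-1$ by $n-1$ and on the parities of $n$ and $m$, producing the four configurations illustrated in Figure~\ref{fig:examples_of_optimal_sets}. Handling these uniformly rather than case by case requires parameterising the $k$-th picked cell explicitly as a function of $k$ and $n$, and verifying that its column index is $2k+1$ regardless of the parity class. Once this parameterisation is in place, the count $\lfloor m/2\rfloor+1$ is immediate, and combining the corollary with the lower bound of Proposition~\ref{proposition_lower_bound} will yield the exact value $\gamma_c(G(n,m))=\lfloor m/2\rfloor+1$.
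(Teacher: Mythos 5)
Your argument for the main case $m\ge n\ge 3$ is essentially the paper's proof: invoke the alternating zig-zag construction of Proposition~\ref{proposition_zig_zag_alternative_solution}, note that the alternating selection along the diagonal contributes $\lfloor m/2\rfloor$ cells, and add the corner $(n,m)$ to reach $\lfloor m/2\rfloor+1$. The extra bookkeeping you flag (parameterising the $k$-th picked cell and checking its column is $2k+1$ across the four parity classes) is exactly what the paper leaves implicit, so on that front you match the intended argument.

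Where you go beyond the paper is in trying to cover $n\le 2$, and this is where a real problem surfaces. Your proposed set for $n=2$, namely $\{(1,2k+1):0\le k\le\lfloor(m-1)/2\rfloor\}\cup\{(2,m)\}$, has $\lfloor(m-1)/2\rfloor+2$ elements; for $m$ odd this equals $\lceil m/2\rceil+1=\lfloor m/2\rfloor+2$, which exceeds the bound you are trying to prove. This is not a fixable slip in your construction: the paper's own Theorem~\ref{theorem_two} asserts $\gamma_c(G(2,m))=\lceil m/2\rceil+1$ when $m$ is odd, so the corollary as stated cannot hold for $n=2$ and $m$ odd, and no choice of $S$ will rescue it. The paper's one-line proof silently restricts attention to the regime where Proposition~\ref{proposition_zig_zag_alternative_solution} applies ($m\ge n\ge 3$) and never addresses $n\in\{0,1,2\}$, so your attempt to be more careful has in fact exposed an inconsistency in the statement rather than a gap in your reasoning. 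You should either restrict the corollary to $m\ge n\ge 3$ (plus $n=1$, where the exact formula gives equality) or explicitly exclude the case $n=2$ with $m$ odd.
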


\begin{proof}
    In the construction of $S$ given in Proposition~\ref{prop:alternate-zigzag}, 
    the cells of the zig-zag path are chosen alternately, giving 
    $\left\lfloor \frac{m}{2} \right\rfloor$ cells. Adding the cell $(n,m)$ yields 
    $\left\lfloor \frac{m}{2} \right\rfloor + 1$ cells in total, establishing the bound.\qedhere
\end{proof}

\begin{proposition}\label{proposition_lower_bound}
For all integers $m \ge n \ge 1$, we have
\begin{equation}
    \gamma_c(G(n,m)) \ge \left\lfloor \frac{m}{2} \right\rfloor + 1.
\end{equation}
\end{proposition}

\begin{proof}
Let $S$ be a contaminating set of minimum cardinality, so that $|S|=\gamma_c(G(n,m))$.

We say that a column is \emph{nonempty} if it contains at least one cell of $S$, and \emph{empty}
otherwise.

By Corollary~\ref{coro:boundary}, the first and the last columns must be nonempty. Moreover, by
Lemma~\ref{lem:no-two-empty}, no two consecutive columns can both
be empty, since otherwise the contamination process cannot produce a fully contaminated grid. We therefore distinguish two cases.

We start with the case $m=2k$ is even. 
Since the first and last columns are nonempty, it remains to consider the $2k-2$ interior columns
$2,3,\dots,2k-1$. Partition them into the $k-1$ consecutive pairs $(2,3),\ (4,5),\ \dots,\ (2k-2,2k-1)$.
 
In each such pair, at least one column must be nonempty; otherwise we would have two consecutive
empty columns, contradicting
Lemma~\ref{lem:no-two-empty}. Hence there are at least $k-1$
nonempty interior columns. Adding the first and last columns, we obtain at least $(k-1)+2 = k+1$ 
nonempty columns in total. Since each nonempty column contains at least one cell of $S$, we get
\[
|S| \ge k+1 = \frac{m}{2}+1 = \left\lfloor \frac{m}{2} \right\rfloor +1.
\]

We now turn to the case $m=2k+1$ is odd. 
Partition the first $2k$ columns into the $k$ consecutive pairs $(1,2),\ (3,4),\ \dots,\ (2k-1,2k)$.
 
Again, each pair must contain at least one nonempty column, for otherwise two consecutive columns
would be empty, contradicting
Lemma~\ref{lem:no-two-empty}. In addition, the last column
$2k+1$ must be nonempty by Corollary~\ref{coro:boundary}. Therefore there are at least $k+1$ nonempty columns in total. Since each nonempty column contains at least one cell of $S$, it follows
that
\[
|S| \ge k+1 = \left\lfloor \frac{2k+1}{2} \right\rfloor +1
= \left\lfloor \frac{m}{2} \right\rfloor +1.
\]

In both cases,
\[
\gamma_c(G(n,m)) = |S| \ge \left\lfloor \frac{m}{2} \right\rfloor +1.\qedhere
\]
\end{proof}

\begin{lemma}\label{lem:canonical-odd-columns}
Let $m=2k+1$ be odd and let $n\ge 3$. If $S$ is an optimal contamination set of $G(n,m)$ and $|S|=k+1$, then:
\begin{enumerate}
    \item each odd column contains exactly one cell of $S$;
    \item each even column contains no cell of $S$.
\end{enumerate}
In particular, every optimal solution is supported on the odd columns
$1,3,5,\dots,2k+1$.
\end{lemma}

\begin{proof}
By Corollary~\ref{coro:boundary} the first and last columns must be nonempty, and by Lemma~\ref{lem:no-two-empty} no two consecutive columns can both be empty. Therefore, among the $2k+1$ columns, the set of nonempty columns must form a binary pattern of length $2k+1$ that: starts with a nonempty column, ends with a nonempty column, and contains no two consecutive empty columns.

Such a pattern has at least $k+1$ nonempty columns, and equality is attained only by the alternating pattern
$1,0,1,0,\dots,1$, 
that is, precisely the odd columns are nonempty.

Since $S$ contains exactly $k+1$ cells in total, each nonempty column contains exactly one cell of $S$. Hence every odd column contains exactly one contaminated cell and every even column contains none.\qedhere
\end{proof}

\subsection{The exact formula and its consequences}
Combining the previous bounds, we obtain the exact value of the power contamination number. We then record several direct consequences, including recurrence relations and structural properties of optimal contamination sets in the odd-width case.

\begin{theorem}\label{theo:main}
    For all integers $m \geq n \geq 1$, we have 
    \begin{equation}
        \gamma_c(G(n,m)) =
        \begin{cases}
            \left\lceil \dfrac{m}{2} \right\rceil + 1, & \text{if $n=2$ and $m$ is odd}, \\[0.6em]
            \left\lfloor \dfrac{m}{2} \right\rfloor + 1, & \text{otherwise}.
        \end{cases}
    \end{equation}
\end{theorem}

\begin{proof}
    If $m \geq n \geq 3$, the result follows immediately from Proposition~\ref{proposition_lower_bound} 
    and Corollary~\ref{corollary_upper_bound}.

    Now suppose $n = 2$. If $m$ is even, the construction in 
    Proposition~\ref{prop:alternate-zigzag} gives a set $S$ 
    of size $\lfloor \frac{m}{2} \rfloor + 1$ which leads to full contamination, proving the bound.

    It remains to treat the case $n=2$ and $m$ odd. By Corollary~\ref{coro:boundary}, 
    each boundary of $G(2,m)$ must contain at least one contaminated cell. Without loss of generality, 
    take $(1,1)$ and $(2,m)$ in $S$. For contamination to propagate from $(1,1)$, 
    the closest additional contaminated cell in the top row must be $(1,3)$: 
    if instead $(2,3)$ were chosen, none of rules~\textup{(\texttt{a})}--\textup{(\texttt{h})} would apply to infect $(1,2)$. 
    Repeating this argument forces all cells $(1,2i+1)$ for 
    $i=0,1,\dots,\frac{m-3}{2}$ to be in $S$.

    After these are chosen, $(1,m-2)$ becomes contaminated. However, its rightmost 
    contaminated neighbour in $S$ is $(2,m)$, and the gap between them prevents 
    contamination from completing unless at least one cell among 
    $(1,m-1), (1,m), (2,m-2), (2,m-1)$ is in $S$. Without loss of generality, 
    take $(1,m)$.

    Therefore, the minimal $S$ is
    \[
        S = \{ (1,2i+1) \mid i = 0,1,\dots,\tfrac{m-1}{2} \} \cup \{ (2,m) \},
    \]
    which has size $\frac{m+1}{2} + 1 = \lceil \frac{m}{2} \rceil + 1$. This completes the proof.\qedhere
\end{proof}

\begin{corollary}  
For all integers $m > n \geq 3$, we have  
\begin{equation}\label{rec_2}
    \gamma_c(G(n,m)) = \gamma_c(G(n,m-1)) + \frac{1}{2}\left( 1 + (-1)^m \right).
\end{equation}
\end{corollary}
\begin{proof}
    This follows directly from Theorem~\ref{theo:main}.\qedhere
\end{proof}

\begin{corollary}
Let $m \geq n \geq 3$, $p \leq n-3$, and $q \leq m-3$, with $m-q \geq n-p$. Then  
\begin{equation}
    \gamma_c(G(n,m)) =
    \begin{cases}
        \gamma_c(G(n-p,m-q)) + \left\lceil \dfrac{q}{2} \right\rceil, & \text{if $m$ is even and $q$ is odd}, \\[8pt]
        \gamma_c(G(n-p,m-q)) + \left\lfloor \dfrac{q}{2} \right\rfloor, & \text{otherwise}.
    \end{cases}
\end{equation}
\end{corollary}

\begin{proof}
Since $p\le n-3$, we have $n-p\ge 3$, so by Theorem~\ref{theo:main},
\[
\gamma_c(G(n,m))=\left\lfloor\frac{m}{2}\right\rfloor+1
=\gamma_c(G(n-p,m)).
\]
Hence
\[
\gamma_c(G(n,m))
=\gamma_c(G(n-p,m-q))+\sum_{i=0}^{q-1}\frac12\bigl(1+(-1)^{m-i}\bigr).\qedhere
\]
\end{proof}

\section{Enumeration and Structure of Optimal Solutions}\label{sec:enumeration}

Having determined the exact value of \(\gamma_c(G(n,m))\), we now turn to the enumeration and structural description of optimal contamination sets. Our goal in this section is twofold: first, to compute or characterize the number \(\alpha_{n,m}\) of optimal solutions for several families of grids, and second, to highlight the combinatorial structures that arise from these optimal configurations.

Table~\ref{tab:number_optimal_solutons} presents the initial values of \(\alpha_{n,m}\) for
\(m \ge n \ge 1\). These values were obtained by an exact search procedure: for each grid
\(G(n,m)\), we systematically examined all configurations of
\(\gamma_c(G(n,m))\) initially contaminated cells among the \(nm\) cells of the grid, that is,
all \(\binom{nm}{\gamma_c(G(n,m))}\) candidate configurations, and counted those that lead to
full contamination.

As part of this exact computational study, we determined the number of optimal contamination
configurations \(\alpha_{n,m}\) for all grids \(G(n,m)\) with \(1 \le n \le m \le 9\). Interestingly,
several of the resulting sequences coincide with well-known entries in the OEIS \cite{OEIS}.

\begin{table}[H]
\centering
\begin{tabular*}{\linewidth}{@{\extracolsep{\fill}}c|llllllllll@{}}
\toprule
$n\backslash m$ &  1 & 2 & 3  & 4  & 5  & 6   & 7   & 8    & 9   & $\cdots$                           \\ \midrule

1                                                           & 1 & 1 & 1  & 2  & 1  & 3   & 1   & 4    & 1   & $\cdots$                           \\
2                                                           &   & 2 & 10 & 8  & 48 & 24  & 176 & 64   & 560 & $\cdots$\\
3                                                           &   &   & 2  & 20 & 10 & 130 & 40  & 640  & 144 & $\cdots$                           \\
4                                                           &   &   &    & 12 & 8  & 210 & 68  & 1736 & 412 & $\cdots$                           \\
5                                                           &   &   &    &    & 6  & 232 & 88  & 3048 & 786 & $\cdots$                           \\
6                                                           &   &   &    &    &    & 122 & 48  & 3104 & 820 & $\cdots$                           \\
7                                                           &   &   &    &    &    &     & 22  & 2260 & 644 & $\cdots$                           \\
8                                                           &   &   &    &    &    &     &     & 912  & 272 & $\cdots$                           \\
9                                                           &   &   &    &    &    &     &     &      & 90  & $\cdots$                           \\
$\vdots$                                                    &   &   &    &    &    &     &     &      &     & $\ddots$                          
\end{tabular*}
\caption{The first values of $\alpha_{n,m}$ for $m\geq n \geq 1$.}
\label{tab:number_optimal_solutons}
\end{table}

\subsection{The families \texorpdfstring{$G(1,m)$}{} and \texorpdfstring{$G(2,m)$}{}}

\begin{proposition}\label{proposition_nmbr_opt_solution}
For all integers $m \geq 1$, we have
\begin{equation}
        \alpha_{1,m} =
    \begin{cases}
        1, & \text{if $m$ is odd}, \\[4pt]
        \dfrac{m}{2}, & \text{if $m$ is even}.
    \end{cases}
\end{equation}
\end{proposition}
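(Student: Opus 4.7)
The plan is to reduce the problem on $G(1,m)$ to a purely combinatorial gap-counting problem, exploiting the fact that only one contamination rule is available in a single row.

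First, I would note that for $n=1$ the only applicable contamination rule is rule~(d): a cell $(1,j)$ becomes contaminated only when both of its horizontal neighbors $(1,j-1)$ and $(1,j+1)$ are contaminated. By Corollary~\ref{coro_1}, the endpoints $(1,1)$ and $(1,m)$ must belong to any contaminating set $S$, since each lies on a boundary of the grid and has only one neighbor.

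Next, let the elements of an optimal $S$ be written as $1 = p_1 < p_2 < \cdots < p_\ell = m$, where $\ell = \gamma_c(G(1,m)) = \lfloor m/2 \rfloor + 1$. I would show that the process fully contaminates $G(1,m)$ if and only if $p_{i+1}-p_i \leq 2$ for every $i$. A gap of size $1$ is trivially closed; a gap of size $2$ is immediately closed by rule~(d); and a gap of size $\geq 3$ leaves two consecutive uncontaminated cells, which by the one-row analogue of Lemma~\ref{lemma_two_uncontaminated_columns_resp_rows} can never be contaminated. Thus optimal contaminating sets correspond bijectively to sequences $(d_1,\dots,d_{\ell-1})$ with $d_i \in \{1,2\}$ and $\sum d_i = m-1$.

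Letting $a$ denote the number of gaps equal to $1$ and $b$ the number of gaps equal to $2$, we have the linear system
\begin{equation*}
    a + b = \ell - 1, \qquad a + 2b = m-1.
\end{equation*}
Substituting $\ell = \lfloor m/2 \rfloor + 1$ forces: if $m = 2k+1$, then $a=0$ and $b=k$, giving a unique gap sequence, hence $\alpha_{1,m}=1$; if $m = 2k$, then $a=1$ and $b=k-1$, and the unique unit gap can be placed in any of the $\ell-1 = k = m/2$ positions, giving $\alpha_{1,m} = m/2$.

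The only nontrivial step is the gap characterization; once established, the enumeration is a direct binomial count $\binom{\ell-1}{a}$, which specializes to $1$ and $m/2$ in the two parity cases.
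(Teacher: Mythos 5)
Your proposal is correct and follows essentially the same route as the paper: both arguments force the two endpoints into $S$, characterize valid optimal sets by the condition that no two consecutive cells are left uncontaminated, and then count the resulting configurations (your composition of $m-1$ into parts $1$ and $2$ is in trivial bijection with the paper's count of non-adjacent uncontaminated cells). The only difference is presentational: your linear system $a+b=\ell-1$, $a+2b=m-1$ derives the odd case ($a=0$, unique solution) rather than merely asserting it, which is a slight gain in rigor but not a different method.
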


\begin{proof}
If $m$ is odd, there is only one optimal solution: the one where the cells in odd positions are contaminated and those in even positions are not.

If $m$ is even, we observe that the leftmost and rightmost cells must both be contaminated. Therefore, we need to determine the positions of the remaining $\gamma_c(G(1,m)) - 2$ contaminated cells among the $m - 2$ remaining positions, such that no two uncontaminated cells are adjacent.

Since $m$ is even, we have
\[
    \gamma_c(G(1,m)) = \left\lfloor \frac{m}{2} \right\rfloor + 1 = \frac{m}{2} + 1.
\]
The number of uncontaminated cells is then $m - \gamma_c(G(1,m)) = \frac{m}{2} - 1$.

The count of ways to choose these uncontaminated cells with no two adjacent among the $m - 2$ available positions is
\[
    \binom{(m - 2) - \left( \frac{m}{2} - 1 \right) + 1}{\frac{m}{2} - 1}
    = \binom{\frac{m}{2}}{\frac{m}{2} - 1}
    = \frac{m}{2}.
\]
This completes the proof.\qedhere
\end{proof}

\begin{proposition}\label{prop:a_2_2k}
For every integer \(k \ge 1\), we have
\begin{equation}
    \alpha_{2,2k}=k 2^{k}.
\end{equation}
Equivalently, the sequence \((\alpha_{2,2k})_{k\ge 1}\) coincides with \textup{\texttt{A036289}}.
\end{proposition}

\begin{proof}
    Consider a grid of $n=2$ rows and $m=2k$ columns, with $k \geq 1$.  
    We call a column \emph{clean} if it contains no contaminated cells.  

    From Lemma~\ref{lem:no-two-empty}, we know that no pair of consecutive columns can be fully clean in an optimal distribution of contaminated cells. Moreover, since $\gamma_c(G(2,2k)) = k+1$, there must be exactly $2k - (k+1) = k-1$ clean columns in an optimal distribution.  

    By Proposition~\ref{prop:rectangular-boundary}(C), the first and the last columns cannot be clean. Thus, the only possible clean columns are  columns $2,3,\dots,2k-1$.  

    Choosing $k-1$ non-consecutive clean columns from these $2k-2$ candidates is equivalent to counting the number of words of length $2k-2$ over the alphabet $\{a,b\}$ with exactly $k-1$ occurrences of $a$ and no two $a$'s consecutive.  
    This number is given by
    \[
        \binom{(2k-2) - (k-1) + 1}{k-1} 
        = \binom{k}{k-1} 
        = k.
    \]

    On the other hand, since there are $k-1$ non-consecutive clean columns, there must be exactly one pair of consecutive contaminated columns among the remaining $k+1$ columns.  

    Each of these two consecutive contaminated columns contains exactly one contaminated cell. By Proposition~\ref{prop:rectangular-boundary}(C), both the upper and the lower row of the grid must contain at least one contaminated cell. Therefore, the two contaminated cells must be placed diagonally (upper cell in the left column and lower cell in the right column, or vice versa). This yields two possible configurations.

    For each of the remaining $k-1$ contaminated columns, the contaminated cell can be chosen arbitrarily (either the upper or the lower cell), giving $2^{k-1}$ possibilities.  

    Consequently, the total number of optimal distributions is
    \[
        \alpha_{2,2k} = k \cdot 2 \cdot 2^{k-1} = k \cdot 2^{k}.\qedhere
    \]
\end{proof}

\begin{corollary}
    For all integers $k \geq 1$, the sequence $\alpha_{2,2k}$ satisfies the recurrence
    \begin{equation} 
        \alpha_{2,2k} = 2 \alpha_{2,2k-2} + 2^k,
    \end{equation}
    with the convention $\alpha_{2,0} = 0$.
\end{corollary}

\begin{proof}
    This follows directly from Proposition~\ref{prop:a_2_2k}.\qedhere
\end{proof}

\begin{lemma}\label{lem:nbr_clean_cols_odd_m}
    Consider a grid of $n=2$ rows and $m=2k+1$ columns, with $k \geq 0$.  
    In any optimal solution, the number of clean columns is   either $k$  or $k-1$ clean columns. 
\end{lemma}

\begin{proof}
    Let us consider a grid of $n=2$ rows and $m=2k+1$ columns, with $k \geq 1$.  
    We have $\gamma_c(G(2,2k+1)) = k+2$.

    By Proposition~\ref{prop:rectangular-boundary}(C), the first and last columns cannot be clean, and hence they must each contain at least one contaminated cell. Therefore, $2$ contaminated cells are already fixed, leaving
    $(k+2) - 2 = k$ contaminated cells to be placed among the remaining columns.

    From Lemma~\ref{lem:no-two-empty}, no pair of consecutive columns can be fully clean in an optimal distribution.  
    Since the first and last columns are certainly contaminated, we only need to ensure that among the  columns $2,3,\dots,2k$, no two consecutive columns are simultaneously clean.  
    There are $2k-1$ such  columns.  

    The minimum number of contaminated columns required to guarantee that no two consecutive  columns are clean is $k-1$.  
    Thus, up to this point, we have used  $2 + (k-1) = k+1$ contaminated cells, which means that one additional contaminated cell remains to be placed. We call this extra contaminated cell the \emph{free cell}.  

    Depending on the position of this free cell, the final number of clean columns will be either $k$ or $k-1$.  Figure~\ref{fig:two_sol_m_equal_7} illustrates these two cases for $m=7$, where $\gamma_c(G(2,7)) = 5$ and $k=3$.\qedhere
\end{proof}

\begin{figure}[H]
    \centering

\begin{tikzpicture}[scale=0.8pt,x=0.75pt,y=0.75pt,yscale=-1,xscale=1]

\draw  [fill={rgb, 255:red, 208; green, 2; blue, 27 }  ,fill opacity=1 ] (157.5,39.5) -- (177.5,39.5) -- (177.5,59.5) -- (157.5,59.5) -- cycle ;
\draw  [fill={rgb, 255:red, 255; green, 255; blue, 255 }  ,fill opacity=1 ] (177.5,39.5) -- (197.5,39.5) -- (197.5,59.5) -- (177.5,59.5) -- cycle ;
\draw  [fill={rgb, 255:red, 255; green, 255; blue, 255 }  ,fill opacity=1 ] (177.5,59.5) -- (197.5,59.5) -- (197.5,79.5) -- (177.5,79.5) -- cycle ;
\draw  [fill={rgb, 255:red, 255; green, 255; blue, 255 }  ,fill opacity=1 ] (157.5,59.5) -- (177.5,59.5) -- (177.5,79.5) -- (157.5,79.5) -- cycle ;
\draw  [fill={rgb, 255:red, 208; green, 2; blue, 27 }  ,fill opacity=1 ] (197.5,39.5) -- (217.5,39.5) -- (217.5,59.5) -- (197.5,59.5) -- cycle ;
\draw  [fill={rgb, 255:red, 255; green, 255; blue, 255 }  ,fill opacity=1 ] (197.5,59.5) -- (217.5,59.5) -- (217.5,79.5) -- (197.5,79.5) -- cycle ;
\draw  [fill={rgb, 255:red, 255; green, 255; blue, 255 }  ,fill opacity=1 ] (217.5,59.5) -- (237.5,59.5) -- (237.5,79.5) -- (217.5,79.5) -- cycle ;
\draw  [fill={rgb, 255:red, 255; green, 255; blue, 255 }  ,fill opacity=1 ] (217.5,39.5) -- (237.5,39.5) -- (237.5,59.5) -- (217.5,59.5) -- cycle ;
\draw  [fill={rgb, 255:red, 255; green, 255; blue, 255 }  ,fill opacity=1 ] (237.5,59.5) -- (257.5,59.5) -- (257.5,79.5) -- (237.5,79.5) -- cycle ;
\draw  [fill={rgb, 255:red, 208; green, 2; blue, 27 }  ,fill opacity=1 ] (237.5,39.5) -- (257.5,39.5) -- (257.5,59.5) -- (237.5,59.5) -- cycle ;
\draw  [fill={rgb, 255:red, 255; green, 255; blue, 255 }  ,fill opacity=1 ] (257.5,59.5) -- (277.5,59.5) -- (277.5,79.5) -- (257.5,79.5) -- cycle ;
\draw  [fill={rgb, 255:red, 255; green, 255; blue, 255 }  ,fill opacity=1 ] (257.5,39.5) -- (277.5,39.5) -- (277.5,59.5) -- (257.5,59.5) -- cycle ;
\draw  [fill={rgb, 255:red, 208; green, 2; blue, 27 }  ,fill opacity=1 ] (277.5,59.5) -- (297.5,59.5) -- (297.5,79.5) -- (277.5,79.5) -- cycle ;
\draw  [fill={rgb, 255:red, 208; green, 2; blue, 27 }  ,fill opacity=1 ] (277.5,39.5) -- (297.5,39.5) -- (297.5,59.5) -- (277.5,59.5) -- cycle ;

\draw  [fill={rgb, 255:red, 208; green, 2; blue, 27 }  ,fill opacity=1 ] (354,39.5) -- (374,39.5) -- (374,59.5) -- (354,59.5) -- cycle ;
\draw  [fill={rgb, 255:red, 255; green, 255; blue, 255 }  ,fill opacity=1 ] (374,39.5) -- (394,39.5) -- (394,59.5) -- (374,59.5) -- cycle ;
\draw  [fill={rgb, 255:red, 255; green, 255; blue, 255 }  ,fill opacity=1 ] (374,59.5) -- (394,59.5) -- (394,79.5) -- (374,79.5) -- cycle ;
\draw  [fill={rgb, 255:red, 255; green, 255; blue, 255 }  ,fill opacity=1 ] (354,59.5) -- (374,59.5) -- (374,79.5) -- (354,79.5) -- cycle ;
\draw  [fill={rgb, 255:red, 208; green, 2; blue, 27 }  ,fill opacity=1 ] (394,39.5) -- (414,39.5) -- (414,59.5) -- (394,59.5) -- cycle ;
\draw  [fill={rgb, 255:red, 255; green, 255; blue, 255 }  ,fill opacity=1 ] (394,59.5) -- (414,59.5) -- (414,79.5) -- (394,79.5) -- cycle ;
\draw  [fill={rgb, 255:red, 255; green, 255; blue, 255 }  ,fill opacity=1 ] (414,59.5) -- (434,59.5) -- (434,79.5) -- (414,79.5) -- cycle ;
\draw  [fill={rgb, 255:red, 255; green, 255; blue, 255 }  ,fill opacity=1 ] (414,39.5) -- (434,39.5) -- (434,59.5) -- (414,59.5) -- cycle ;
\draw  [fill={rgb, 255:red, 255; green, 255; blue, 255 }  ,fill opacity=1 ] (434,59.5) -- (454,59.5) -- (454,79.5) -- (434,79.5) -- cycle ;
\draw  [fill={rgb, 255:red, 208; green, 2; blue, 27 }  ,fill opacity=1 ] (434,39.5) -- (454,39.5) -- (454,59.5) -- (434,59.5) -- cycle ;
\draw  [fill={rgb, 255:red, 255; green, 255; blue, 255 }  ,fill opacity=1 ] (474,59.5) -- (494,59.5) -- (494,79.5) -- (474,79.5) -- cycle ;
\draw  [fill={rgb, 255:red, 255; green, 255; blue, 255 }  ,fill opacity=1 ] (454,39.5) -- (474,39.5) -- (474,59.5) -- (454,59.5) -- cycle ;
\draw  [fill={rgb, 255:red, 208; green, 2; blue, 27 }  ,fill opacity=1 ] (454,59.5) -- (474,59.5) -- (474,79.5) -- (454,79.5) -- cycle ;
\draw  [fill={rgb, 255:red, 208; green, 2; blue, 27 }  ,fill opacity=1 ] (474,39.5) -- (494,39.5) -- (494,59.5) -- (474,59.5) -- cycle ;

\draw (170,92.33) node [anchor=north west][inner sep=0.75pt]  [font=\footnotesize] [align=left] {$\displaystyle 3$ clean columns};
\draw (365,92.33) node [anchor=north west][inner sep=0.75pt]  [font=\footnotesize] [align=left] {$\displaystyle 2$ clean columns};

\end{tikzpicture}
    \caption{Two optimal solutions for $m=7$.}
    \label{fig:two_sol_m_equal_7}
\end{figure}

\begin{proposition}\label{prop:nbr_optimal_solutions_whit_k_clean_cols}
    Consider a grid of $n=2$ rows and $m=2k+1$ columns, with $k \geq 1$.  
    Then, the number of optimal solutions with exactly $k$ clean columns is $(k+1)   2^{k}$.
\end{proposition}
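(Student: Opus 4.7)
The plan is to characterize the shape of every optimal configuration with exactly $k$ clean columns, count those configurations combinatorially, and then verify that each indeed leads to full contamination.

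First, I would pin down the column-level geometry. By Theorem~\ref{theorem_two} we have $\gamma_c(G(2,2k+1))=k+2$, so such a configuration places $k+2$ cells in the $k+1$ contaminated columns. By Corollary~\ref{coro_1} neither column $1$ nor column $2k+1$ can be clean, and by Lemma~\ref{lemma_two_uncontaminated_columns_resp_rows} no two clean columns are consecutive. A standard gap-counting argument shows that placing $k$ pairwise non-adjacent clean columns in the $2k-1$ interior positions $\{2,\dots,2k\}$ admits exactly $\binom{k}{k}=1$ solution, namely $\{2,4,\dots,2k\}$; hence the contaminated columns are precisely the odd ones $\{1,3,\dots,2k+1\}$, in full agreement with Lemma~\ref{m_odd_col_odd}. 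Since $k+2$ cells are spread over $k+1$ columns, exactly one column (the \emph{double column}) contains both cells, while each of the remaining $k$ columns contains a single cell (top or bottom).

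The counting step is then immediate: there are $k+1$ choices for the double column and $2^k$ independent top/bottom choices for the singletons, giving $(k+1)\,2^k$ candidate configurations.

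The heart of the proof is to show that every candidate fully contaminates the grid. Let $c\in\{1,3,\dots,2k+1\}$ be the double column. I would argue by outward induction from $c$ that the already contaminated block of columns grows by two columns at each step, in either direction. For the rightward step, suppose columns $c,c+1,\dots,c+2t$ are fully contaminated and let $a\in\{1,2\}$ be the row of the singleton in column $c+2t+2$. Rule~(d) applied with $v=(a,c+2t)$ and $w=(a,c+2t+2)$ contaminates $(a,c+2t+1)$. If $a=1$, rule~(e) at $(2,c+2t+1)$ with $v=(2,c+2t),\ w=(1,c+2t+1)$ contaminates $(2,c+2t+1)$; if $a=2$, rule~(f) at $(1,c+2t+1)$ with $v=(1,c+2t),\ w=(2,c+2t+1)$ contaminates $(1,c+2t+1)$. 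Applying the same rule shifted one column to the right then fills the second cell of column $c+2t+2$. The leftward step is symmetric, with rules~(g) and~(h) playing the roles of~(e) and~(f); the corner cells $(1,1),(2,1)$ at the left and $(1,2k+1),(2,2k+1)$ at the right are handled by exactly these L-shaped rules, since each is applicable at the appropriate corner. The induction fully contaminates every column, so each of the $(k+1)\,2^k$ candidates succeeds and the announced count is exact.

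The main obstacle is the inductive propagation step, because the rule to apply depends on the row of the incoming singleton; the cleanest way to manage this is to split into the two cases $a=1$ and $a=2$ and to pair rules~(d), (e), (f) for rightward propagation with rules~(d), (g), (h) for leftward propagation, which yields a uniform template that covers every singleton regardless of parity and avoids any case-by-case re-derivation across the $(k+1)\,2^k$ configurations.
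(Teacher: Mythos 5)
Your proposal is correct and follows essentially the same route as the paper: the unique non-adjacent placement of the $k$ clean columns forces the contaminated columns to be exactly the odd ones, pigeonhole gives a unique ``double'' column among the $k+1$ contaminated columns (the paper's ``free cell''), and the count $(k+1)\cdot 2^{k}$ falls out identically. The one substantive addition is your explicit two-column-at-a-time induction outward from the double column using rules (d)/(e)/(f) and (d)/(g)/(h), which verifies that \emph{every} candidate configuration actually contaminates the whole grid; the paper only gestures at this sufficiency via Lemma~\ref{lemma_bottom_row}, so your version closes a step the paper leaves implicit.
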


\begin{proof}
    Suppose first that the optimal solution contains $k$ clean columns.  
    Similarly to the proof of Proposition~\ref{prop:a_2_2k}, the number of ways of choosing $k$ non-consecutive clean columns among the $2k-1$ available  columns is
    \[
        \binom{(2k+1-2) - k + 1}{k} 
        = \binom{k}{k} 
        = 1.
    \]

    In this configuration, the \emph{free cell} can only be placed in one of the $k+1$ already contaminated columns. Hence, there are $k+1$ possible positions for the free cell.  
    The column containing the free cell becomes fully contaminated (both upper and lower cells are contaminated).

    Among the remaining $k$ contaminated columns (those with exactly one contaminated cell), each can be chosen freely in the upper or lower row. Since the fully contaminated column already guarantees the condition of Proposition~\ref{prop:rectangular-boundary}(C), the other $k$ columns can be chosen independently, giving $2^{k}$ possibilities.

    Consequently, the number of optimal solutions in this case is $1 \cdot (k+1) \cdot 2^{k} = (k+1)2^{k}$.\qedhere
\end{proof}

For every integer $k\ge 1$, let $\beta_k$ denote the number of optimal solutions of $G(2,2k+1)$
having exactly $k-1$ clean columns.

\begin{corollary}\label{coro:alpha2odd-decomposition}
For every integer $k\ge 1$, we have
\begin{equation}
    \alpha_{2,2k+1}=(k+1)2^k+\beta_k.
\end{equation}
\end{corollary}

\begin{proof}
By Lemma~\ref{lem:nbr_clean_cols_odd_m}, every optimal solution of $G(2,2k+1)$ has either exactly $k$ clean columns
or exactly $k-1$ clean columns. By Proposition~\ref{prop:nbr_optimal_solutions_whit_k_clean_cols}, the number of optimal solutions with exactly
$k$ clean columns is $(k+1)2^k$. By definition, the number of optimal solutions with exactly
$k-1$ clean columns is $\beta_k$. Summing the two disjoint cases gives the result. 
\end{proof}

The remaining case, namely the enumeration of optimal solutions with exactly $k-1$ clean columns,
seems to be more delicate. Our computations suggest that the total number $\alpha_{2,2k+1}$
coincides with the OEIS sequence \textup{\texttt{A084857}}. This leads to the following conjecture.
\begin{conjecture}\label{conj:alpha2odd-total}
For every integer \(k \ge 1\), we have
\begin{equation}
    \alpha_{2,2k+1}=(k+1)(3k+2)2^{k-1}.
\end{equation}
Equivalently, the sequence \((\alpha_{2,2k+1})_{k\ge 1}\) coincides with \textup{\texttt{A084857}} shifted by one index, namely
\[
\alpha_{2,2k+1}=\textup{\texttt{A084857}}(k+1).
\]
\end{conjecture}

Assuming Conjecture~\ref{conj:alpha2odd-total}, we immediately obtain the following expression
for the more difficult subfamily counted by $\beta_k$.

\begin{observation} 
If Conjecture~\ref{conj:alpha2odd-total} holds, then by Corollary~\ref{coro:alpha2odd-decomposition}, for every integer $k\ge 1$,
\[
\beta_k=\alpha_{2,2k+1}-(k+1)2^k=(k+1)(3k+2)2^{k-1}-(k+1)2^k
      =(k+1)(3k)2^{k-1}
      =3k(k+1)2^{k-1}.
\]
 \end{observation}

\subsection{The family \texorpdfstring{$G(3,2k+1)$}{} and ternary words}

We now specialize to the case of optimal solutions on \(G(3,2k+1)\).

By Lemma~\ref{lem:canonical-odd-columns}, every optimal contamination set
\(S\) of \(G(3,2k+1)\) has exactly one contaminated cell in each odd column and
no contaminated cell in any even column. Hence we may write
\[
S=\{(r_j,2j+1)\mid 0\le j\le k\}, 
\]
 with $r_j\in\{1,2,3\}$, and encode \(S\) by the word $w(S)=r_0r_1\cdots r_k\in\{1,2,3\}^{k+1}$. 

When no confusion is possible, we simply write \(w\).

For such a set \(S\), define
\[
D(S):=S\cup \{(r_j,2j)\mid 1\le j\le k,\ r_{j-1}=r_j\}.
\]
Thus \(D(S)\) is obtained from \(S\) by adding exactly those even-column cells
that are immediately contaminated by rule~\textup{(\texttt{d})} from two equal adjacent
letters of \(w\).

\begin{lemma}\label{lem:3row-avoid}
Let \(S\) be an optimal contamination set of \(G(3,2k+1)\), and let
\(w=r_0r_1\cdots r_k\in\{1,2,3\}^{k+1}\) be its associated word. If \(w\)
avoids both factors \(13\) and \(31\), then
\[
C(S)=D(S).
\]
In particular, each column of \(C(S)\) contains at most one contaminated cell,
and therefore \(S\) does not fully contaminate \(G(3,2k+1)\).
\end{lemma}

\begin{proof}
We first prove that \(D(S)\subseteq C(S)\). Every cell of \(S\) is contaminated
initially. Now let \(1\le j\le k\) and assume that \(r_{j-1}=r_j\). Then the
two contaminated cells $(r_{j-1},2j-1)$ and $(r_j,2j+1)$ lie in the same row, so by rule~\textup{(\texttt{d})} the cell \((r_j,2j)\) becomes
contaminated. Hence every cell of \(D(S)\) belongs to \(C(S)\).

It remains to show that no cell outside \(D(S)\) can be contaminated from
\(D(S)\). For this, we record three immediate observations.

\noindent\textit{(i) Each column contains at most one cell of \(D(S)\).}
Indeed, each odd column contains exactly one cell of \(S\), and an even column
contains at most one cell, namely \((r_j,2j)\) when \(r_{j-1}=r_j\).

\noindent\textit{(ii) If two consecutive columns are both nonempty in \(D(S)\),
then their contaminated cells lie in the same row.}
Indeed, if column \(2j\) is nonempty, then by definition its unique cell is
\((r_j,2j)\), while column \(2j+1\) contains \((r_j,2j+1)\). The same argument
applies to the pair \((2j-1,2j)\).

\noindent\textit{(iii) If two columns at distance \(2\) are both nonempty in
\(D(S)\), then their contaminated cells lie either in the same row or in two
consecutive rows.}
If the two columns are odd, say \(2j-1\) and \(2j+1\), then their contaminated
cells are \((r_{j-1},2j-1)\) and \((r_j,2j+1)\). Since \(w\) avoids \(13\) and
\(31\), we have \(|r_j-r_{j-1}|\le 1\). If the two columns are even, say
\(2j\) and \(2j+2\), then both being nonempty implies $r_{j-1}=r_j=r_{j+1}$,  so the two contaminated cells lie in the same row.

We now check that none of the rules \textup{(\texttt{a})}--\textup{(\texttt{h})} can produce a
cell outside \(D(S)\).

Rules \textup{(\texttt{a})} and \textup{(\texttt{b})} require two contaminated cells in columns
at distance \(2\) and in rows \(1\) and \(3\), which is impossible by
observation~(iii). Rule \textup{(\texttt{c})} requires two contaminated cells in the
same column, which is impossible by observation~(i). Rules
\textup{(\texttt{e})}--\textup{(\texttt{h})} require two contaminated cells in consecutive
columns and adjacent rows, which is impossible by observation~(ii).

Finally, consider rule~\textup{(\texttt{d})}. If it applies to two contaminated cells in
the same row and in columns at distance \(2\), then either the target cell lies
in an even column \(2j\), in which case it is exactly \((r_j,2j)\in D(S)\), or
the target cell lies in an odd column, which already contains its unique
initial contaminated cell from \(S\subseteq D(S)\).

Therefore no cell outside \(D(S)\) can be contaminated from \(D(S)\). Since
\(D(S)\subseteq C(S)\), it follows that \(C(S)=D(S)\).

By observation~(i), each column of \(C(S)\) contains at most one contaminated
cell. Hence \(C(S)\neq V(G(3,2k+1))\), so \(S\) does not fully contaminate the
grid.\qedhere
\end{proof}

\begin{lemma}\label{lem:3row-factor}
Let \(S\) be an optimal contamination set of \(G(3,2k+1)\), and let
\(w=r_0r_1\cdots r_k\in\{1,2,3\}^{k+1}\) be its associated word. Then \(S\)
fully contaminates \(G(3,2k+1)\) if and only if \(w\) contains the factor
\(13\) or \(31\).
\end{lemma}

\begin{proof}
Suppose first that \(w\) contains the factor \(13\) or \(31\). Then for some
\(t\in\{0,\dots,k-1\}\), the contaminated cells in columns \(2t+1\) and
\(2t+3\) lie in rows \(1\) and \(3\), in some order. Hence, inside the
\(3\times 3\) subgrid induced by columns \(2t+1,2t+2,2t+3\), these two cells
are positioned according to rule~\textup{(\texttt{a})} or rule~\textup{(\texttt{b})}. By Observation~\ref{obse:observations}~(A), this \(3\times 3\) subgrid becomes fully
contaminated.

From there, the contamination propagates to the right as follows. Assume that
three consecutive columns \(2j+1,2j+2,2j+3\) are fully contaminated, with
\(2j+5\le 2k+1\). Since column \(2j+5\) already contains its initial seed, rule
\textup{(\texttt{d})} first contaminates one cell in column \(2j+4\); then, using the
fully contaminated column \(2j+3\), rules \textup{(\texttt{e})}--\textup{(\texttt{h})} fill
column \(2j+4\), after which column \(2j+5\) becomes fully contaminated as
well. Repeating this argument propagates contamination all the way to the
right. By symmetry, the same argument propagates contamination to the left.
Hence the whole grid becomes contaminated.

Conversely, suppose that \(w\) avoids both \(13\) and \(31\). Then
Lemma~\ref{lem:3row-avoid} gives \(C(S)=D(S)\), and in particular each column
of \(C(S)\) contains at most one contaminated cell. Therefore
\(C(S)\neq V(G(3,2k+1))\), so \(S\) does not fully contaminate the grid.

Thus \(S\) fully contaminates \(G(3,2k+1)\) if and only if \(w\) contains the
factor \(13\) or \(31\).\qedhere
\end{proof}

\begin{proposition}\label{prop:alpha-3-odd}
For every integer \(k\ge 1\), $\alpha_{3,2k+1}$ counts the number of ternary words of length \(k+1\) over \(\{1,2,3\}\)
containing at least one occurrence of \(13\) or \(31\). This number is given by \textup{\texttt{A193519}}$(k+1)$.
\end{proposition}

\begin{proof}
By Lemma~\ref{lem:canonical-odd-columns}, every optimal solution on
\(G(3,2k+1)\) is encoded by a unique word $w=r_0r_1\cdots r_k\in\{1,2,3\}^{k+1}$.

By Lemma~\ref{lem:3row-factor}, such a configuration fully contaminates the grid
if and only if the associated word contains the factor \(13\) or \(31\).
Therefore \(\alpha_{3,2k+1}\) is exactly the number of ternary words of length
\(k+1\) containing \(13\) or \(31\).\qedhere
\end{proof}

\begin{remark}
Equivalently, $\alpha_{3, 2k+1}=3^{k+1}-u_{k+1}$, where $u_{n}$ counts ternary words of length $n$ over $\{1,2,3\}$ avoiding both $13$ and $31$.
\end{remark}

\subsection{Odd square grids, permutations, and large Schr\"oder numbers}

In what follows, we discuss the connection between the quantity 
$\alpha_{2k+1, 2k+1}$ and the OEIS sequence $\texttt{A006318}$, 
which enumerates the $k$-th \emph{large Schröder numbers}. First, we introduce some preliminaries that shall be used throughout this discussion.
\begin{definition}
    A permutation in $\mathcal{S}_{k}$ is an arrangement of the numbers $1, 2, \ldots, k$ for some $k \geq 1$.  
A permutation $\pi$ is \emph{contained} in another permutation $\sigma$ if $\sigma$ has a subsequence whose terms are in the same relative order as those of $\pi$. Otherwise, we say that $\sigma$ \emph{avoids} $\pi$.
\end{definition}

\begin{example} The permutation $3142$ is contained in $1573462$ because the subsequence $5362$ is ordered the same way as $3142$.  
On the other hand, if we take $\pi = 2413 \in \mathcal{S}_4$ and consider $\sigma_{1} = 43125$ and $\sigma_{2} = 35124$, we can check containment by listing all subsequences of length $4$, rewriting them as elements of $\mathcal{S}_4$, and comparing to $\pi$.

\begin{table}[H]
\centering
\begin{tabular}{cc| cc}
\hline
\textit{subsequences of $\sigma_1$} & \textit{image in $S_4$} &
\textit{subsequences of $\sigma_2$} & \textit{image in $S_4$} \\
\hline
4312 & 4312 & 3512 & 3412 \\
4315 & 3214 & 3514 & \textbf{2413} \\
4325 & 3214 & 3524 & \textbf{2413} \\
4125 & 3124 & 3124 & 3124 \\
3125 & 3124 & 5124 & 4123 \\
\hline
\end{tabular}
\caption{Subsequences of $\sigma_1$ and $\sigma_2$ and their images in $S_4$.}
\label{tab:subsequences-images}
\end{table}

As we can see in Table~\ref{tab:subsequences-images}, none of the subsequences of length $4$ of $\sigma_1$ matches $\pi$.  
On the other hand, two of the subsequences of length $4$ of $\sigma_2$ match $\pi$, shown in bold.  
Therefore, $\sigma_2$ contains $\pi$.

\end{example}

\begin{proposition}\label{prop:odd-square-permutations}
For every integer $k\ge 1$, each optimal solution of $G(2k+1,2k+1)$ corresponds to a permutation
of the set $\{1,3,\dots,2k+1\}$.
 
More precisely, if $j$ is odd, and $i_j$ denotes the unique row occupied by the contaminated cell
in column $j$, then $i_1i_3\cdots i_{2k+1}$ is a permutation of $\{1,3,\dots,2k+1\}$.
\end{proposition}

\begin{proof}
Let $S$ be an optimal solution of $G(2k+1,2k+1)$. Since the grid has odd width and $2k+1\ge 3$, Lemma~\ref{lem:canonical-odd-columns} applies. Hence each odd column
contains exactly one contaminated cell of $S$, and each even column contains no contaminated
cell.

Because $G(2k+1,2k+1)$ is a square grid, the same argument may be applied by symmetry to
the rows: each odd row contains exactly one contaminated cell of $S$, and each even row
contains none.

Now, for each odd column $j\in\{1,3,\dots,2k+1\}$, let $i_j$ denote the unique row occupied
by the contaminated cell in column $j$. Since every odd row contains exactly one contaminated
cell, the values $i_1,i_3,\dots,i_{2k+1}$ are precisely the odd integers $1,3,\dots,2k+1$,  each appearing exactly once. Therefore, $i_1 i_3 \cdots i_{2k+1}$ is a permutation of the set $\{1,3,\dots,2k+1\}$.\qedhere
\end{proof}

\begin{conjecture}\label{conj:schroder}
    
For all integers $k \geq 1$, we have
\begin{equation}
    \alpha_{2k+1, 2k+1} = \textup{\texttt{A006318}}(k),
\end{equation}
where \textup{\texttt{A006318}}$(k)$ denotes the $k$-th \emph{Large Schröder number}.
\end{conjecture}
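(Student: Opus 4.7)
The plan is to first reduce the problem to enumerating a class of permutations, and then identify that class with the separable permutations (a classical combinatorial interpretation of $\texttt{A006318}$).

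\emph{Step 1: reduction to permutation matrices on the odd-odd subgrid.} Applying Lemma \ref{m_odd_col_odd} to $G(2k+1, 2k+1)$, every contaminated cell lies in an odd-indexed column, and since both $n$ and $m$ are odd, the symmetric argument (swapping rows and columns) places every contaminated cell in an odd-indexed row as well. Hence every optimal set $S$ sits inside the $(k+1)\times(k+1)$ subgrid of odd-indexed positions. Next, if any odd column were clean, then combined with its (necessarily clean) adjacent even column it would form a pair of consecutive clean columns, contradicting Lemma \ref{lemma_two_uncontaminated_columns_resp_rows}; similarly for rows. Since $|S| = \gamma_c(G(2k+1,2k+1)) = k+1$ equals both the number of odd columns and odd rows, $S$ contains exactly one cell in each odd row and each odd column. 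Thus optimal sets are in bijection with permutations $\pi \in \mathcal{S}_{k+1}$, where the cell in odd column $j$ sits in odd row $\pi(j)$.

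\emph{Step 2: the claim becomes a pattern-avoidance statement.} Since $\texttt{A006318}(k)$ counts separable permutations of $[k+1]$ (equivalently, those avoiding both patterns $2413$ and $3142$), it suffices to prove that $\pi$ yields full contamination if and only if $\pi$ is separable. Denote the induced cell configuration by $C_\pi$.

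\emph{Step 3: separable $\Rightarrow$ valid, by induction on $k$.} The base case $k=0$ is trivial. For the inductive step, write a separable $\pi$ as $\alpha \oplus \beta$ or $\alpha \ominus \beta$ with $\alpha \in \mathcal{S}_p$, $\beta \in \mathcal{S}_q$, $p+q = k+1$. In the direct-sum case, $C_\pi$ splits cleanly into two cell sets, one supported in the upper-left block $G(2p-1, 2p-1)$ and the other in the lower-right block $G(2q-1, 2q-1)$, each matching (up to translation) the configuration $C_\alpha$, resp.\ $C_\beta$. By the induction hypothesis these two blocks each become fully contaminated. The remaining uncontaminated region is an L-shaped frame whose two contaminated corner-blocks expose two adjacent contaminated edges along the central row $2p$ and central column $2p$; Lemma \ref{4} (applied iteratively to the two off-diagonal rectangles) then finishes the contamination. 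The skew-sum case is symmetric.

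\emph{Step 4: valid $\Rightarrow$ separable, the main obstacle.} The hard direction is to show that a permutation containing a $2413$ or $3142$ pattern gives a configuration from which contamination cannot spread to the entire grid. The plan is to argue by contrapositive: given a witness of the pattern, identify a local rectangular region bounded by the four witness cells whose interior cannot be reached by any sequence of rule applications. Concretely, for a $2413$-witness at columns $j_1 < j_2 < j_3 < j_4$ with rows $\pi(j_3) < \pi(j_1) < \pi(j_4) < \pi(j_2)$, one tracks what each of rules $(a)$--$(h)$ can contribute in the sub-rectangle spanned by these four cells, showing that no pair of contaminated cells ever satisfies conditions $(i)$ or $(ii)$ for any cell inside the ``forbidden'' central region. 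A symmetric analysis handles $3142$. The delicate point is that these four cells are not isolated: one must show that no combination of cells outside the pattern can ``rescue'' the contamination, which is where the odd-odd permutation structure from Step 1 becomes essential, as it restricts all surrounding seeds to lie on the same sparse lattice. Once this blocking is established, contrapositively every valid $\pi$ must avoid $2413$ and $3142$, hence is separable, completing the proof.
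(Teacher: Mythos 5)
Your proposal follows the same overall strategy as the paper's own treatment of this statement: restrict optimal sets to the odd--odd sublattice via Lemma \ref{m_odd_col_odd}, encode them as permutations of $\{1,3,\dots,2k+1\}$, and identify validity with avoidance of $2413$ and $3142$. Note, however, that the paper itself only offers a ``sketch of an approach'' and leaves the statement as a conjecture; it never proves either implication. Your Step 3 is a genuine advance over that sketch: using the sum/skew-sum decomposition of separable permutations together with an induction, the two diagonal blocks contaminate by the inductive hypothesis, the central row and column $2p$ can then be filled in starting from the corner cell $(2p,2p)$ via rule $(a)$ and then rules $(f)$--$(h)$, and the two remaining off-diagonal rectangles are handled by Lemma \ref{4}. (You should state this intermediate step explicitly --- row $2p$ and column $2p$ are not contaminated edges until you contaminate them --- but that is a fixable detail.)

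The genuine gap is Step 4, and you have identified it yourself without closing it. Showing that a $2413$- or $3142$-witness blocks full contamination requires an invariant preserved by \emph{every} application of rules $(a)$--$(h)$ throughout the entire process, because the threat is not the four witness cells in isolation but the possibility that contamination flowing in from elsewhere in the grid eventually supplies the missing pair needed inside the ``forbidden'' region. A purely local case analysis on the sub-rectangle spanned by the witness cannot rule this out: cells outside that rectangle do get contaminated over time, and the rules only require two suitably placed contaminated neighbours, wherever those came from. What is needed is a global monotone quantity (for instance, a region provably never entered by the closure operator $C(\cdot)$, certified by checking that no rule has both preconditions inside the reachable set), and neither you nor the paper constructs one. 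As written, the forward implication of your Step 2 equivalence is unestablished, so the proposal does not yet prove the statement; it upgrades the paper's sketch to ``half a proof plus a plan.''
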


By Proposition~\ref{prop:odd-square-permutations}, every optimal solution of $G(2k+1,2k+1)$
corresponds to a permutation of the set $\{1,3,\dots,2k+1\}$.
 
Equivalently, after relabeling $1,3,\dots,2k+1$ by $1,2,\dots,k+1$, each optimal solution gives
rise to a permutation in $S_{k+1}$.

For example, in $G(7,7)$ we have $\gamma_c(G(7,7))=4$, and Figure~\ref{fig:solutions_of_7_by_7} illustrates all $22$
optimal solutions, while Table~\ref{tab:sol_of_7_7} lists the corresponding permutations together with their images
in $S_4$. Among the $4!=24$ permutations, the two missing ones are $5173$ and $3715$, whose
images in $S_4$ are precisely the forbidden patterns $3142$ and $2413$, respectively.

Similarly, for $G(9,9)$, out of the $5!=120$ permutations of $\{1,3,5,7,9\}$, exactly $90$
correspond to optimal solutions; the remaining $30$ contain one of the patterns $2413$ or $3142$.

These observations strongly suggest that the optimal solutions of $G(2k+1,2k+1)$ are governed
by the same pattern-avoidance phenomenon that characterizes the large Schröder numbers.
Accordingly, Conjecture~\ref{conj:schroder} may be viewed as a pattern-avoidance conjecture for
optimal contamination sets on odd square grids. A proof would require a direct characterization
of which permutations of $\{1,3,\dots,2k+1\}$ correspond to optimal contamination sets, ideally
in terms of avoidance of the patterns $2413$ and $3142$.
 
  \subsection{General bounds and extension mechanisms}
 
We conclude the section with more general estimates and construction principles for odd-width grids. These results show how optimal solutions may be generated, bounded, or extended, and provide further evidence for the rich combinatorial structure underlying the problem.

\begin{figure}[H]
    \centering

}
\caption{Permutations of $G(7,7)$ optimal solutions and their images in $\mathcal{S}_4$.}
\label{tab:sol_of_7_7}
\end{table}

\begin{proposition}\label{prop:odd-m-upper-bound}
For all integers $m\ge n\ge 3$ with $m$ odd, we have
\begin{equation}
    \alpha_{n,m}\le n^{\gamma_c(G(n,m))}.
\end{equation}
\end{proposition}

\begin{proof}
Let $m=2k+1$. By Lemma~\ref{lem:canonical-odd-columns}, every optimal solution of $G(n,m)$ contains exactly one contaminated cell in each odd column and no contaminated cell in any even column. Hence an optimal solution is completely determined by choosing, independently for each of the $k+1$ odd columns, one of the $n$ rows in which the contaminated cell is placed.

Therefore,
\[
\alpha_{n,m}\le n^{k+1}.
\]
Since
\[
k+1=\frac{m+1}{2}=\left\lfloor \frac m2\right\rfloor+1=\gamma_c(G(n,m)),
\]
the result follows.\qedhere
\end{proof}

\begin{proposition}\label{prop:square-upper-bounds}
For every odd integer $m\ge 3$, we have
\[
\alpha_{m,m}\le \min\!\left\{ m^{\gamma_c(G(m,m))},\ \gamma_c(G(m,m))!\right\}.
\]
\end{proposition}

\begin{proof}
The first bound  follows directly from Proposition~\ref{prop:odd-m-upper-bound} applied with $n=m$.

For the second bound, let $m=2k+1$. By Proposition~\ref{prop:odd-square-permutations}, each
optimal solution of $G(m,m)$ corresponds to a permutation of the set $\{1,3,\dots,2k+1\}$, which has cardinality $k+1$. Therefore,
\[
\alpha_{m,m}\le (k+1)!.
\]
Since
\[
k+1=\left\lfloor\frac{m}{2}\right\rfloor+1=\gamma_c(G(m,m)),
\]
it follows that
\[
\alpha_{m,m}\le \gamma_c(G(m,m))!.
\]
Combining the two bounds, we obtain the result.\qedhere
\end{proof}

\begin{proposition}
    Consider a grid $G(n,m)$ with $m-2 \geq n \geq 3$ and $m$ odd. Then we have the following: 
    \begin{itemize}
        \item Each optimal solution of $G(n,m-2)$ can be used to generate $n$ 
        optimal solutions of $G(n,m)$;
        \item Not all optimal solutions of $G(n,m)$ can be used to generate a solution for $G(n,m-2)$.
    \end{itemize}
\end{proposition}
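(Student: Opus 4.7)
The plan hinges on two observations already established in the paper: by Theorem \ref{theorem_two}, since $m$ and $m-2$ are both odd with $m-2 \geq n \geq 3$, we have $\gamma_c(G(n,m)) = \gamma_c(G(n,m-2)) + 1$; and by Lemma \ref{m_odd_col_odd}, every optimal solution of either grid contains exactly one contaminated cell in each odd column. The odd columns of $G(n,m)$ are precisely those of $G(n,m-2)$ together with the rightmost column $m$, so the natural candidate liftings of an optimal $S \subseteq V_{n,m-2}$ are the sets $S_i := S \cup \{(i,m)\}$ for $i \in \{1,\dots,n\}$.

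For the first item, I would fix an optimal $S$ of $G(n,m-2)$, form $S_i$ as above, and verify contamination of $G(n,m)$ in three cascading stages. First, running the propagation process from $S$ inside $V_{n,m-2}$ produces full contamination of that subgrid by hypothesis, so in particular column $m-2$ becomes entirely contaminated. Second, rule (d) applied to $(i,m-1)$ using the pair $(i,m-2), (i,m)$ contaminates $(i,m-1)$; induction on $|j-i|$, at each step combining an already-contaminated cell of column $m-1$ with its horizontal neighbour in column $m-2$ via rule (e) or rule (f), then contaminates every remaining cell of column $m-1$. Third, once column $m-1$ is fully contaminated, the same pair of rules applied to column $m$ --- using cells of column $m-1$ together with the anchor $(i,m)$ --- contaminates every remaining cell of column $m$. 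Since $|S_i| = \gamma_c(G(n,m))$, each $S_i$ is optimal, and the $n$ choices of $i$ clearly give $n$ pairwise distinct optimal solutions.

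For the second item, I would exhibit an optimal $S'$ of $G(n,m)$ whose restriction $S' \cap V_{n,m-2}$ leaves the bottom row $n$ entirely empty; by Corollary \ref{coro_1}, such a restriction cannot be a contaminating set of $G(n,m-2)$. For $n = 3$ the explicit family
\[
S' = \{(1,2k-1) : 1 \leq k \leq (m-1)/2\} \cup \{(3,m)\}
\]
has the correct cardinality $(m+1)/2 = \gamma_c(G(3,m))$, and its restriction lies entirely in row $1$. To check that $S'$ itself contaminates $G(3,m)$, row $1$ fills up to column $m-2$ by successive rule (d) applications, the Moore neighbourhoods satisfy $M((1,m-2)) \cap M((3,m)) = \{(2,m-1)\}$, so rule (a) contaminates $(2,m-1)$, and the cascade that follows is entirely analogous to the verification in the proof of Lemma \ref{lemma_zig_zag_solution}. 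For $n \geq 4$ I would replace the row-$1$ prefix of $S'$ by a partial main-diagonal segment $\{(2k-1,2k-1) : 1 \leq k \leq \lfloor n/2 \rfloor\}$, keep a row-$1$ tail on the remaining odd columns up to $m-2$, and anchor the construction with $(n,m)$; this restriction again contains no row-$n$ cell. The main obstacle is precisely the second item for $n \geq 4$: verifying that this generalized $S'$ contaminates $G(n,m)$ requires checking, pair-by-pair through the sequence of seeds, that some contamination rule triggers --- rule (a) or (b) for diagonally-aligned pairs at Chebyshev distance $2$, rule (d) for pairs within row $1$, and a combination of rule (ii) together with several applications of rules (e)--(h) to bridge the last pre-anchor seed to the corner $(n,m)$. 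A small case split on the parity of $n$ and the relative sizes of $\lfloor n/2 \rfloor$ and $(m-1)/2$ should absorb the bulk of the technical work.
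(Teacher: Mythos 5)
Your proposal takes essentially the same route as the paper's proof. For the first bullet, the paper likewise appends one contaminated cell in column $m$ to an optimal solution of $G(n,m-2)$ and counts the $n$ choices; your version is more careful, since you actually verify via rules (d), (e), (f) that columns $m-1$ and $m$ get contaminated, a step the paper asserts without detail. For the second bullet, your explicit family $S'=\{(1,2k-1)\}\cup\{(3,m)\}$ specializes at $m=5$ to exactly the configuration the paper displays in its figure, and your propagation check (rule (d) along row~1, then rule (a) from $M((1,m-2))\cap M((3,m))=\{(2,m-1)\}$) is sound. The only loose end is your $n\ge 4$ construction, which you sketch but do not verify; note, however, that the paper's own proof of the second bullet consists solely of the single $G(3,5)$ example and does not address $n\ge 4$ either, so your proposal is, if anything, strictly more complete than the published argument. (For what it is worth, your diagonal-plus-row-one construction does work: once it produces the contaminated rectangle on rows $1,\dots,2\lfloor n/2\rfloor-1$ and columns $1,\dots,m-2$, rule (a) applied to the pair $(n-2,m-2)$, $(n,m)$ contaminates $(n-1,m-1)$ and the cascade through rules (e)--(h) finishes the grid, so the case split you anticipate is manageable.)
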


\begin{proof}
    Consider the first $m-2$ columns of the grid $G(n,m)$, which together correspond to $G(n,m-2)$.  Any optimal solution for $G(n,m-2)$ that uses $\gamma_c(G(n,m-2))$ 
    contaminated cells will fully contaminate the first $m-2$ columns of $G(n,m)$.  
    Since  $\gamma_c(G(n,m)) = \gamma_c(G(n,m-2)) + 1$, the remaining contaminated cell can be chosen in any of the $n$ cells of the last column $m$, yielding a total of $n$ possible optimal solutions for $G(n,m)$.

    However, not all optimal solutions of $G(n,m)$ can be used to generate a solution for $G(n,m-2)$ by simply removing the last two columns.  
    Figure~\ref{fig:solution_for_n3_m5} shows such an example.\qedhere
\end{proof}

\begin{figure}[H]
    \centering

\begin{tikzpicture}[scale=0.8pt,x=0.75pt,y=0.75pt,yscale=-1,xscale=1]

\draw  [fill={rgb, 255:red, 208; green, 2; blue, 27 }  ,fill opacity=1 ] (292.33,27.33) -- (312.33,27.33) -- (312.33,47.33) -- (292.33,47.33) -- cycle ;
\draw  [fill={rgb, 255:red, 255; green, 255; blue, 255 }  ,fill opacity=1 ] (312.33,27.33) -- (332.33,27.33) -- (332.33,47.33) -- (312.33,47.33) -- cycle ;
\draw  [fill={rgb, 255:red, 255; green, 255; blue, 255 }  ,fill opacity=1 ] (312.33,47.33) -- (332.33,47.33) -- (332.33,67.33) -- (312.33,67.33) -- cycle ;
\draw  [fill={rgb, 255:red, 255; green, 255; blue, 255 }  ,fill opacity=1 ] (292.33,47.33) -- (312.33,47.33) -- (312.33,67.33) -- (292.33,67.33) -- cycle ;
\draw  [fill={rgb, 255:red, 208; green, 2; blue, 27 }  ,fill opacity=1 ] (332.33,27.33) -- (352.33,27.33) -- (352.33,47.33) -- (332.33,47.33) -- cycle ;
\draw  [fill={rgb, 255:red, 255; green, 255; blue, 255 }  ,fill opacity=1 ] (332.33,47.33) -- (352.33,47.33) -- (352.33,67.33) -- (332.33,67.33) -- cycle ;
\draw  [fill={rgb, 255:red, 255; green, 255; blue, 255 }  ,fill opacity=1 ] (352.33,47.33) -- (372.33,47.33) -- (372.33,67.33) -- (352.33,67.33) -- cycle ;
\draw  [fill={rgb, 255:red, 255; green, 255; blue, 255 }  ,fill opacity=1 ] (352.33,27.33) -- (372.33,27.33) -- (372.33,47.33) -- (352.33,47.33) -- cycle ;
\draw  [fill={rgb, 255:red, 255; green, 255; blue, 255 }  ,fill opacity=1 ] (372.33,47.33) -- (392.33,47.33) -- (392.33,67.33) -- (372.33,67.33) -- cycle ;
\draw  [fill={rgb, 255:red, 255; green, 255; blue, 255 }  ,fill opacity=1 ] (372.33,27.33) -- (392.33,27.33) -- (392.33,47.33) -- (372.33,47.33) -- cycle ;
\draw  [fill={rgb, 255:red, 255; green, 255; blue, 255 }  ,fill opacity=1 ] (292.33,67.33) -- (312.33,67.33) -- (312.33,87.33) -- (292.33,87.33) -- cycle ;
\draw  [fill={rgb, 255:red, 255; green, 255; blue, 255 }  ,fill opacity=1 ] (312.33,67.33) -- (332.33,67.33) -- (332.33,87.33) -- (312.33,87.33) -- cycle ;
\draw  [fill={rgb, 255:red, 255; green, 255; blue, 255 }  ,fill opacity=1 ] (332.33,67.33) -- (352.33,67.33) -- (352.33,87.33) -- (332.33,87.33) -- cycle ;
\draw  [fill={rgb, 255:red, 255; green, 255; blue, 255 }  ,fill opacity=1 ] (352.33,67.33) -- (372.33,67.33) -- (372.33,87.33) -- (352.33,87.33) -- cycle ;
\draw  [fill={rgb, 255:red, 208; green, 2; blue, 27 }  ,fill opacity=1 ] (372.33,67.33) -- (392.33,67.33) -- (392.33,87.33) -- (372.33,87.33) -- cycle ;

\end{tikzpicture}
\caption{An optimal solution for $G(3,5)$ that cannot be used to generate a solution for $G(3,3)$ by removing the last two columns.}
    \label{fig:solution_for_n3_m5}
\end{figure}

\section{Conclusion and Perspectives}\label{sec:conclusion}

In this paper, we gave a complete solution to the power contamination problem on rectangular grid graphs. We first disproved Conjecture~\ref{conj}   by exhibiting a counterexample, and then determined the exact value of the contamination number \(\gamma_c(G(n,m))\) for all integers \(m \ge n \ge 1\). In addition, we derived recurrence relations and structural properties of optimal contamination sets, especially in the odd-width case. We then initiated a combinatorial study of the number \(\alpha_{n,m}\) of optimal solutions. For several grid families, we obtained explicit formulas and interpretations in terms of classical combinatorial objects, including binary choices, ternary words with forbidden factors, and permutations associated with odd square grids. These results reveal that the power contamination problem on grids is not only an extremal propagation problem but also a rich source of enumerative phenomena, with connections to well-known integer sequences and, conjecturally, to the large Schr\"oder numbers.

Several directions remain open for future research. On the enumerative side, it would be especially interesting to settle Conjecture~\ref{conj:alpha2odd-total}, which predicts a closed formula for \(\alpha_{2,2k+1}\), and Conjecture~\ref{conj:schroder}, which suggests that the numbers \(\alpha_{2k+1,2k+1}\) are given by the large Schr\"oder numbers and may admit a description in terms of permutation pattern avoidance.   Another natural direction is to study the contamination speed, namely the number of propagation steps needed to contaminate the whole grid from an optimal set. It would also be interesting to extend the analysis to other graph classes or lattice structures, and to investigate algorithmic aspects of the problem, including exact methods, constructive generation of optimal solutions, and efficient recognition or counting procedures for special families of graphs.

 \section*{Statements and Declarations}

\subsection*{Funding}
The authors declare that no funds, grants, or other support were received during the preparation of this manuscript.

\subsection*{Competing Interests}
The authors have no relevant financial or non-financial interests to disclose.

\subsection*{Author Contributions}

Both authors contributed equally to this work. Both authors have read and approved the final manuscript.

\subsection*{Data Availability}
No data were used in this study.

\bibliographystyle{plainurl} 
\bibliography{bibou}

\end{document}